\shorttitle{Geometric functionals of fractal percolation: convergence a.s.\ and 2nd moments.}
\numberwithin{equation}{section} \numberwithin{thm}{section}
\newcommand{\R}{{\mathbb R}}
\newcommand{\N}{{\mathbb N}}
\newcommand{\Z}{{\mathbb Z}}
\newcommand{\E}{{\mathbb E}}
\newcommand{\BE}{{\mathbb E}}
\renewcommand{\P}{{\mathbb P}}
\newcommand{\deq}{\overset{d}{=}}
\newcommand{\eps}{\varepsilon}
\newcommand{\sS}{\mathcal{S}}
\newcommand{\sT}{\mathcal{T}}
\newcommand{\Sim}{\mathsf{Sim}}
\newcommand{\intr}{\mathsf{int}}
\newcommand{\ind}{\mathbf{1}}
\newcommand{\Var}{\mathrm{Var}} 
\newcommand{\Cov}{\mathrm{Cov}}
\newcommand{\osV}{\overline{\mathcal V}}
\newcommand\widecheck[1]{%
\savestack{\tmpbox}{\stretchto{%
  \scaleto{%
    \scalerel*[\widthof{\ensuremath{#1}}]{\kern-.6pt\bigwedge\kern-.6pt}%
    {\rule[-\textheight/2]{1ex}{\textheight}}
  }{\textheight}%
}{0.5ex}}%
\stackon[1pt]{#1}{\scalebox{-1}{\tmpbox}}%
}
\begin{document}

\title{Almost sure convergence and second moments of geometric functionals of fractal percolation.}

\authorone[Heinrich-Heine-Universit\"at D\"usseldorf]{Michael A.\ Klatt} 
\addressone{Institut f\"ur Theoretische Physik II: Weiche Materie,
Heinrich-Heine-Universit\"at D\"usseldorf, 40225 D\"usseldorf,
Germany; Experimental Physics, Saarland University, Center for
Biophysics, 66123 Saarbrücken, Germany;
Present address:
Institut für KI Sicherheit, Deutsches Zentrum für Luft‐ und Raumfahrt (DLR), Wilhelm‐Runge‐Straße 10, 89081 Ulm, Germany;
Institut für Materialphysik im Weltraum, Deutsches Zentrum für Luft- und Raumfahrt (DLR), 51170 Köln, Germany;
Department of Physics, Ludwig-Maximilians-Universität, Schellingstraße 4, 80799 Munich, Germany.
} 
\emailone{michael.klatt@dlr.de} 

\authortwo[Karlsruhe Institute of Technology]{Steffen Winter}
\addresstwo{Karlsruhe Institute of Technology, Department of Mathematics, Englerstr.~2, 76128 Karlsruhe, Germany}
\emailtwo{steffen.winter@kit.edu}
%
%
%
%


\ams{28A80}{60K35,82B43,60D05}
\keywords{fractal percolation, Mandelbrot percolation, Minkowski functionals, intrinsic volumes, curvature measures, fractal curvatures, random self-similar set, renewal theorem, Galton-Watson process, branching random walk}

\begin{abstract}
We determine almost sure limits of rescaled intrinsic volumes of the construction steps of fractal percolation in $\R^d$ for any dimension $d\geq 1$.
We observe a factorization of these limit variables which allows, in particular, to determine their expectations and covariance structure. We also show convergence of rescaled expectations and variances of the intrinsic volumes of the construction steps to expectations and variances of the limit variables and give rates for this convergence in some cases.
These results significantly extend our previous work that addressed only limits of expectations of intrinsic volumes.
\end{abstract}


\section{Introduction} \label{sec:intro}

Let $p\in(0,1]$ and $M\in\N$, $M\geq 2$. Divide the unit cube $J:=[0,1]^d$ of $\R^d$ into $M^d$ subcubes of sidelength $1/M$. 
Keep each of these subcubes independently with probability $p$ and discard it otherwise. 
Then repeat this construction independently for each of the retained cubes, producing a random collection of cubes of sidelength $1/M^2$, and so on. 
For $n\in\N$, denote by $F_n$ the union of the cubes retained in the $n$-th step of this construction.
The sets $F_n$ form a decreasing sequence of random compact subsets of $J$ and its limit
\begin{align}
  F:= \bigcap_{n\in\N_0} F_{n}
\end{align}
is known as \emph{fractal percolation} or \emph{Mandelbrot percolation}, cf.\ e.g.\
\cite{Mandelbrot74,CCD88}. It is easily seen that for any $p<1$, $F$ has a positive probability of being empty and it is well known that $F$ is in fact almost surely empty if $p$ is too small, i.e., if $p\leq 1/M^d$.
For $p>1/M^d$, however, there is a positive probability (depending on $p,M$ and $d$) that $F\neq \emptyset$, and conditioned on $F$ being nonempty,  the Hausdorff dimension and equally the Minkowski dimension of $F$ are almost surely given by the number
\begin{align} \label{eq:dimF}
   D:=\frac{\log(M^d p)}{\log(M)},
\end{align}
see e.g.\ \cite{CCD88}. Many properties of this simple model have been studied, including e.g.\ its connectivity \cite{CCD88,BroCam08,BroCam10}, its visibility (or behaviour under projections) \cite{MR2928497,MR3316924,MR3163542}, its porosity \cite{BJ19,MR3736180}, path properties \cite{MR1378847,BCJM13} and very recently its (un-)rectifiability \cite{MR4291467}.

Intrinsic volumes are a basic tool in stochastic geometry and other fields of mathematics providing essential geometric information about complex structures. We refer to \cite[Ch.~4]{Schneider14} or \cite[Ch.~14.2]{SchneiderWeil08} for their definition and properties, see also the summary in our previous paper \cite[p.~1087-1088]{KW18}. Let $V_k(F_n)$, $k=0,1,\ldots, d$ denote the intrinsic volumes of the random set $F_n$ in $\R^d$. Note that they are well defined, since each $F_n$ is a finite union of cubes and thus polyconvex almost surely. In \cite{KW18}, we have studied the expected intrinsic volumes $\E V_k(F_n)$ of the construction steps $F_n$ and in particular their limiting behaviour as $n\to\infty$. More precisely, we have established the existence of the limit functionals 
$$
  \osV_k(F):=\lim_{n\to\infty} M^{n(k-D)} \E V_k(F_{n})
  $$
and derived formulas to compute them, see also Theorem~\ref{thm:Vk-limit-general} below. Moreover,
we have investigated possible connections of these functionals with percolation thresholds, see \cite{KW18}.

In this article we continue our investigation of the random variables $V_k(F_n)$ and their limiting behaviour as $n\to\infty$. We are interested in the question which further properties of these random geometric functionals beyond convergence of expectations can be derived.  Here we will prove in particular that the random variables $V_k(F_n)$, when properly rescaled, converge almost surely, as $n\to\infty$, to some nontrivial limit, and we will determine expectations and covariances of the limit variables. Moreover, for $k=d$ and $d-1$ (i.e., for volume and surface area) we derive expansions for the expectations and variances of the functionals $V_k(F_n)$ of the $n$-th approximations. This will not only enable us to show convergence of these expectations and variances (when suitably rescaled) as $n\to\infty$ to expectation and variance of the limit variable but also to obtain the rates for this convergence.

{\bf Outline.} In the next two sections we will formulate our main results. The almost sure convergence of the rescaled intrinsic volumes and some consequences are discussed in Section~\ref{sec:main}, while some additional results regarding the finite approximations are addressed in Section~\ref{sec:additional}. In order to prepare the proofs, we review in Section~\ref{sec:2}  random iterated function systems as well as Nerman's renewal theorem for branching random walks. 
Sections~\ref{sec:proof}--\ref{sec:var_surf} are devoted to the proofs. In Section~\ref{sec:proof} the main result of Section~\ref{sec:main} is proved, and in Section~\ref{sec:var_vol} expectation and variance of the volume of the finite approximations $F_n$ are discussed. Finally, in Section~\ref{sec:var_surf}, expectation and variance of the surface area of $F_n$ are addressed.

\section{Almost sure convergence of rescaled intrinsic volumes} \label{sec:main}

Let $F$ 
be a fractal percolation on $J=[0,1]^d$ with parameters $M\in\N_{\geq 2}$ and $p\in(0,1]$. Recall that the $n$-th construction step $F_n$ is a random union of cubes of sidelength $1/M^n$, which we will call the \emph{basic cubes of level $n$} in the sequel. 
Let us first focus on the (random) numbers $N_n$ of basic cubes of level $n$ contained in $F_n$. For convenience, we also set $F_0:=J$ to be the unit cube and  $N_0:=1$. The sequence of the random variables $N_n$, $n\in\N_0$ forms a Galton-Watson process with a binomial offspring distribution with parameters $M^d$ and $p$. Indeed, by the assumed independence in the subdivision and retention procedure, the number of preserved subcubes of any existing cube of any level is a sum of $M^d$ independent Bernoulli random variables with parameter $p$ and thus $\text{Bin}(M^d,p)$-distributed.  
In particular, we have $N_1\sim\text{Bin}(M^d,p)$ and hence $\E N_1=M^dp$ and $\Var(N_1)=M^dp(1-p)$. From this, one can easily deduce (see e.g.~\cite[Ch. I.A.2]{AN72}) that mean and variance of $N_n$ are given by $\E N_n=(\E N_1)^n=(M^dp)^n$ and
\begin{align} \label{eq:variance-N-n}
  \Var(N_n)&=\frac{\Var(N_1)}{\E N_1-1} (\E N_1)^{n-1}((\E N_1)^n-1) \notag\\
  &=\frac{1-p}{M^dp-1} (M^dp)^n\left((M^dp)^n-1\right),
\end{align}
provided $\E N_1\neq 1$, and $\Var(N_n)=n\cdot \Var(N_1)$ in case $\E N_1=1$, i.e.\ $p=1/M^d$.

Furthermore, it is well known (see e.g.~~\cite[Thm.~I.6.1, p.9]{AN72}) that the sequence $W_n:=N_n/\BE N_n$, $n\in\N$ is a martingale with respect to the filtration naturally induced by the construction steps of the process (i.e., the sequence $(\mathcal{F}_n)_n$, where $\mathcal{F}_n$ is the $\sigma$-algebra generated by $N_0,N_1,\ldots,N_n$). Since $W_n\geq 0$, this implies that there exists a random variable $W_\infty$ such that
\begin{align}\label{eq:def-W_infty}
\lim_{n\to\infty} W_n=W_\infty \qquad \text{ almost surely. }
\end{align}
Moreover, for $p>M^{-d}$, we have $\E N_1>1$ (and $\Var(N_1)<\infty$) and therefore \cite[Thm.~I.6.2]{AN72} implies
\begin{align} \label{eq:var_W_infty}
  \E W_\infty=1 \quad \text{ and } \quad \Var(W_\infty)=\frac{\Var(N_1)}{(\E N_1)^2-\E N_1}=\frac{1-p}{M^dp-1}.
\end{align}


In \cite{KW18}, we studied the limiting behaviour of expected intrinsic volumes $\E V_k(F_n)$ of the construction steps $F_n$, as $n\to\infty$, and found that these functionals converge for all parameter combinations $M,p$ when suitably rescaled. We recall the main result concerning this convergence. Denote the basic cubes of level 1 by $J_1,\ldots, J_{M^d}$ and let, for each $j\in\Sigma:=\{1,\ldots,M^d\}$ and each $n\in\N$,
$
F_n^j
$
be the union of those basic cubes of level $n$ that are contained in the union $F_n$ and subcubes of $J_j$, see also \eqref{eq:Fj_ndef} for a formal definition. 
\begin{thm}\label{thm:Vk-limit-general} {\cite[Theorem 1.1]{KW18}}
Let $F$ 
be a fractal percolation on $[0,1]^d$ with parameters $M\in\N_{\geq 2}$ and $p\in(0,1]$. 
Let $D$ be as in~\eqref{eq:dimF}. 
  Then, for each $k\in\{0,\ldots,d\}$, the limit
  $$
  \osV_k(F):=\lim_{n\to\infty} M^{n(k-D)} \E V_k(F_{n})
  $$
  exists and is given by the expression
  \begin{align} \label{eq:Vk-limit-general}
    V_k([0,1]^d) +\sum_{T\subset\Sigma,|T|\geq 2}(-1)^{|T|-1} \sum_{n=1}^\infty M^{n(k-D)} \E V_k(\bigcap_{j\in T} F^j_{n}).
  \end{align}
\end{thm}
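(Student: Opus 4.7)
The plan is to combine the additivity of intrinsic volumes with the statistical self-similarity of fractal percolation. Since each $F_n$ is polyconvex and $F_n=\bigcup_{j\in\Sigma} F_n^j$, the inclusion-exclusion principle for additive functionals on the convex ring yields
$$
V_k(F_n)=\sum_{\emptyset\neq T\subseteq\Sigma}(-1)^{|T|-1}V_k\!\left(\bigcap_{j\in T}F_n^j\right).
$$
Taking expectations, I would split off the $|T|=1$ terms. By construction, for each $j\in\Sigma$ the set $F_n^j$ is empty with probability $1-p$ and otherwise is equal in distribution to a scaled copy (by factor $1/M$) of an independent realisation of $F_{n-1}$. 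Using the homogeneity $V_k(\lambda A)=\lambda^k V_k(A)$, this gives $\E V_k(F_n^j)=p M^{-k}\E V_k(F_{n-1})$. Summing over the $M^d$ subcubes and using the relation $p=M^{D-d}$ coming from \eqref{eq:dimF}, the singleton contribution equals $M^{D-k}\E V_k(F_{n-1})$.

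Setting $a_n:=M^{n(k-D)}\E V_k(F_n)$ and multiplying the identity by $M^{n(k-D)}$, the singleton part produces exactly $a_{n-1}$, so the recursion reduces to the telescoping form $a_n=a_{n-1}+b_n$ with
$$
b_n:=M^{n(k-D)}\sum_{\substack{T\subseteq\Sigma\\ |T|\geq 2}}(-1)^{|T|-1}\E V_k\!\left(\bigcap_{j\in T}F_n^j\right).
$$
Since $a_0=V_k([0,1]^d)$, iteration gives $a_n=V_k([0,1]^d)+\sum_{m=1}^n b_m$, so that existence of the limit $\osV_k(F)$ and the stated formula \eqref{eq:Vk-limit-general} follow at once \emph{provided} the series $\sum_{m=1}^\infty b_m$ converges absolutely.

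The main obstacle is establishing this absolute convergence, and it is where the geometry enters. For any $T\subseteq\Sigma$ with $|T|\geq 2$, the intersection $\bigcap_{j\in T}J_j$ is a common face of the unit subcubes of dimension at most $d-1$, and $\bigcap_{j\in T}F_n^j$ is contained in this face. Conditionally on all cubes in $T$ being retained, the restriction to this face is distributed like the intersection of $|T|$ independent lower-dimensional fractal percolations living on the face, so an extra probability factor $p^{|T|}$ appears and the relevant effective dimension is strictly smaller than $D$. I would therefore set up an induction on the ambient dimension: using the known scaling of expected intrinsic volumes of fractal percolations in dimensions $\ell<d$, a careful bookkeeping of how many index sets $T$ yield a face of each dimension $\ell$, together with the extra $p^{|T|}$ factor, shows that $M^{n(k-D)}|\E V_k(\bigcap_{j\in T}F_n^j)|$ decays geometrically in $n$. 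This yields summability of $\sum_n b_n$ and identifies its limit with $\osV_k(F)-V_k([0,1]^d)$, completing the proof.
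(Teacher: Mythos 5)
Your decomposition is exactly the one behind this theorem: formula \eqref{eq:Vk-limit-general} is literally the telescoped form of your recursion $a_n=a_{n-1}+b_n$ with $a_0=V_k([0,1]^d)$, the singleton computation $\E V_k(F_n^j)=pM^{-k}\E V_k(F_{n-1})$ is equation~\eqref{eq:self-sim-j-Exp} of Lemma~\ref{lem:self-sim-j}, and the present paper runs the same inclusion--exclusion decomposition pathwise in Section~\ref{sec:proof}. (Note the theorem is imported from \cite[Theorem~1.1]{KW18} and not reproved here; the convergence of the inner series is \cite[Proposition~5.1]{KW18}.)

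The one step you should tighten is the absolute convergence of $\sum_n M^{n(k-D)}\E V_k(\bigcap_{j\in T}F_n^j)$. Invoking ``the known scaling of expected intrinsic volumes of fractal percolations in dimensions $\ell<d$'' does not quite work: the intersection of the $|T|\ge 2$ independent percolations restricted to the common face is not itself a fractal percolation (only its cube counts agree with one of parameter $p^{|T|}$), and $V_k$ is a signed, non-monotone functional on the convex ring, so the scaling of $\E V_k$ for a single lower-dimensional percolation gives no bound on $\E\bigl|V_k(\bigcap_{j\in T}F_n^j)\bigr|$. What is needed is the deterministic variation estimate $|V_k(A)|\le C_k^{\var}(A)\le c_{u,k}M^{-kn}\cdot(\text{number of level-}n\text{ cubes in }A)$ for unions of lattice cubes inside a $u$-dimensional face, $u\le d-1$ (this is \cite[Lemma~7.2]{KW18}, used in the present paper in the proof of Proposition~\ref{prop:main-estimate}). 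Combined with the Galton--Watson mean $(M^{u}p^{|T|})^n$ for the surviving cube count and the relation $M^D=M^dp$, it yields $M^{n(k-D)}\E\bigl|V_k(\bigcap_{j\in T}F_n^j)\bigr|\le c_{u,k}\,(M^{u-d}p^{|T|-1})^n\le c_{u,k}M^{-n}$, which is exactly the geometric decay you need; no induction on the ambient dimension is required. With that substitution your argument is complete and coincides with the original one.
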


Thus, the expectations $\E V_k(F_n)$ of the random variables $V_k(F_n)$ converge as $n\to\infty$, when rescaled with the sequence $M^{n(k-D)}$. It is a natural question, whether also the random variables $V_k(F_n)$ themselves converge. In view of the convergence behaviour of their expectations, it is likely to be a good idea to rescale them in the same way by $M^{n(k-D)}$.  Therefore, we define the random variables
\begin{align*}
  Z^k_n:=M^{(k-D)n} V_k(F_n), \qquad n\in\N, \quad k\in\{0,1,\ldots,d\}.
\end{align*}
The following statement is our main result. It establishes for each $k\in\{0,\ldots,d\}$ the almost sure convergence of the sequence $(Z^k_n)$ as $n\to\infty$ for all parameter combinations $(p,M)$ for which a nontrivial limit set $F$ exists, i.e.\ whenever $p>M^{-d}$.

\begin{thm} \label{thm:as-conv-Vk}
Let $F$ 
be a fractal percolation on $[0,1]^d$ with parameters $M\in\N_{\geq 2}$ and $p\in(M^{-d},1]$. 
  Then, for each $k\in\{0,\ldots,d\}$, the random variables $Z^k_n$ converge almost surely, as $n\to\infty$, to some random variable $Z^k_\infty$. Moreover, $Z^k_\infty$ is given by
  \begin{align} \label{eq:factorization}
     Z^k_\infty=\osV_k(F)\cdot W_\infty,
  \end{align}
  i.e., $Z^k_\infty$ factorizes into a deterministic part $\osV_k(F)$, given by Theorem~\ref{thm:Vk-limit-general}, and a random part $W_\infty$, which is the martingale limit given by \eqref{eq:def-W_infty}.
\end{thm}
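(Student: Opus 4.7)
The plan is to reduce the claim to a Nerman-type renewal problem on the Galton--Watson tree associated with $(N_n)$ and to apply the branching-random-walk renewal theorem recalled in Section~\ref{sec:2}. The starting point is the self-similar decomposition $F_n = \bigcup_{j \in \Sigma,\, \xi_j = 1} \phi_j(F_{n-1}^{(j)})$, where $\phi_j\colon J \to J_j$ is the canonical similarity and $(F_{n-1}^{(j)})_{j \in \Sigma}$ are i.i.d.\ copies of $F_{n-1}$ independent of $(\xi_j)_j$. Applying inclusion--exclusion to this union and using the scaling $V_k(\phi_j A) = M^{-k} V_k(A)$ yields, after multiplication by $M^{(k-D)n}$, the recursion
\begin{align*}
Z^k_n = M^{-D} \sum_{j \in \Sigma} \xi_j Z^{k,(j)}_{n-1} + R_n,
\end{align*}
with $M^{-D} = (M^d p)^{-1}$, the $Z^{k,(j)}_{n-1}$ i.i.d.\ copies of $Z^k_{n-1}$, and $R_n$ collecting the rescaled intersection contributions from subsets $T \subseteq \Sigma$ with $|T| \geq 2$. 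Iterating this recursion down to level $0$ expresses $Z^k_n$ as a single branching-tree sum in the format required by Nerman's theorem.

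I then verify the hypotheses of Nerman's theorem for the characteristic produced by this iteration. The offspring distribution is binomial and hence bounded, so all moment conditions (including $L\log L$) are trivial, while the integrability in the time variable of the expected characteristic reduces to the absolute convergence of the double series in~\eqref{eq:Vk-limit-general} already established in~\cite{KW18}. Nerman's theorem then delivers the almost-sure convergence $Z^k_n \to c_k \cdot W_\infty$ for a deterministic constant $c_k$, with the same martingale limit $W_\infty$ appearing in~\eqref{eq:def-W_infty}; the fact that the random factor of the limit is exactly this $W_\infty$ is part of the conclusion of the theorem. To identify $c_k = \osV_k(F)$, I pass to expectations in the limit---justified by $L^2$-boundedness of $(Z^k_n)$, which itself follows from the finite second moment of the offspring distribution---and compare with $\E Z^k_n \to \osV_k(F)$ from Theorem~\ref{thm:Vk-limit-general} and $\E W_\infty = 1$ from~\eqref{eq:var_W_infty}.

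The principal obstacle is the remainder $R_n$, since it couples several siblings simultaneously and therefore does not fit the standard single-characteristic branching-random-walk format. I plan to handle this by enlarging the bookkeeping so that each non-singleton subset $T \subseteq \Sigma$ contributes its own characteristic on a marked branching process; Nerman's theorem is then applied mark by mark, and the sum over marks converges because the outer sum in~\eqref{eq:Vk-limit-general} is absolutely convergent. A secondary technical point is the uniform integrability needed to identify $c_k$: for $k = d$ it is immediate from the identity $Z^d_n = V_d(J) \cdot W_n$, and for $k < d$ it follows from second-moment bounds on the characteristic that are available because the offspring distribution has bounded support.
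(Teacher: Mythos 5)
Your overall architecture is the same as the paper's: decompose $V_k(F_n)$ over the first-level cubes by inclusion--exclusion, iterate to express the rescaled functional as a branching sum of a characteristic over the Galton--Watson tree, and apply the (lattice/discrete) Nerman renewal theorem, identifying the deterministic factor with the series in \eqref{eq:Vk-limit-general}. Two remarks on points where your plan diverges. First, your ``principal obstacle'' is not actually an obstacle: in Nerman's framework the characteristic attached to an individual $\sigma$ is allowed to depend on the \emph{entire} subtree rooted at $\sigma$, so the remainder $Y_n$ built from the intersections $\bigcap_{j\in T}F_n^{[j]}$ over subsets $T$ of the first generation is a perfectly admissible single characteristic of the parent; no marked branching process is needed, and the paper uses none. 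Second, the constant is identified directly by the conclusion of the renewal theorem (the limit is $W_\infty\sum_n M^{-Dn}\E[Y_n]$), so your detour through $L^2$-boundedness and convergence of expectations is unnecessary (and, as stated, your justification via ``bounded offspring support'' would not by itself control second moments of $V_k(F_n)$ for $k<d$; one needs the variation bound $|V_k(F_n)|\le C_k^{\var}(F_n)\le c\,M^{-kn}N_n$).

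The genuine gap is your verification of the hypothesis of Nerman's theorem. Condition \eqref{eq:rt-cond} (resp.\ \eqref{eq:RTdiscrete-cond}) requires
\[
\E\Bigl[\sup_{n\in\N}\frac{M^{-Dn}|Y_n|}{h_n}\Bigr]<\infty
\]
for some non-increasing summable $(h_n)$, i.e.\ an integrable bound on the \emph{pathwise supremum over time} of the rescaled characteristic. You replace this by ``integrability in the time variable of the expected characteristic,'' i.e.\ by the absolute convergence of $\sum_n M^{(k-D)n}\E\bigl|V_k(\bigcap_{j\in T}F_n^j)\bigr|$ from \cite[Proposition 5.1]{KW18}. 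That is a strictly weaker statement and does not imply the supremum condition. Establishing the supremum bound is the substantive technical content of the paper's proof (Proposition~\ref{prop:main-estimate}): one observes that $\bigcap_{j\in T}F_n^{[j]}$ lives on a $u$-dimensional face with $u\le d-1$, models it by independent $u$-dimensional fractal percolations, invokes the total-variation estimate $|V_k(\cdot)|\le C_k^{\var}(\cdot)\le c_{u,k}M^{-kn}N^u_n$ from \cite[Lemma 7.2]{KW18}, and then controls $\E\bigl[\sup_n M^{-\alpha n}W^u_n\bigr]$ with $\alpha=d-u-\tfrac12\ge\tfrac12$ by monotone convergence. Without this (or an equivalent) argument, the application of Nerman's theorem is not justified and the proof is incomplete.
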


The factorization of the limit variable $Z^k_\infty$ in \eqref{eq:factorization} separates the probabilistic effects from the geometric information. The random variations in the limit set $F$  only depend on the underlying Galton-Watson process, i.e., on the number of cubes that survive but not on their positions or mutual intersections. Such geometric information is captured by the purely deterministic factors $\osV_k(F)$, which
depend on $k$ and describe some almost sure geometric property of the
limit set $F$.


Fortunately, expectation and variance of $W_\infty$ are well known, see \eqref{eq:var_W_infty}. This allows to derive from the above statement the complete covariance structure of the random variables $Z^0_\infty, Z^1_\infty,\ldots,Z^d_\infty$.

 \begin{cor}
    For each $k\in\{0,\ldots,d\}$, the limit variable $Z^k_\infty$ has mean $\osV_k(F)$ and positive and finite variance given by
  \begin{align}
    \Var(Z^k_\infty)=(\osV_k(F))^2\cdot \Var(W_\infty)= (\osV_k(F))^2\cdot\frac{1-p}{M^d p-1}.
    \label{eq:var}
  \end{align}
  Moreover, for any $k,\ell\in\{0,\ldots,d\}$,
  \begin{align}
    \Cov(Z^k_\infty,Z^\ell_\infty)=\osV_k(F)\cdot \osV_{\ell}(F)\cdot\Var(W_\infty)= \osV_k(F)\cdot \osV_{\ell}(F)\cdot\frac{1-p}{M^d p-1}.
    \label{eq:cov}
  \end{align}
 \end{cor}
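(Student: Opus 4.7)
The plan is to derive this corollary as an essentially immediate consequence of the factorization \eqref{eq:factorization} established in Theorem~\ref{thm:as-conv-Vk}, combined with the known mean and variance of the martingale limit $W_\infty$ recorded in \eqref{eq:var_W_infty}. Since $\osV_k(F)$ is a deterministic constant (given by Theorem~\ref{thm:Vk-limit-general}), writing $Z^k_\infty=\osV_k(F)\cdot W_\infty$ reduces every first- and second-order computation about $Z^k_\infty$ to the corresponding one about $W_\infty$.

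First I would compute the expectation: by linearity and the factorization,
\[
  \E Z^k_\infty=\osV_k(F)\cdot \E W_\infty=\osV_k(F),
\]
using $\E W_\infty=1$ from \eqref{eq:var_W_infty}. Next, pulling out the deterministic factor from the variance gives
\[
  \Var(Z^k_\infty)=\osV_k(F)^2\cdot \Var(W_\infty)=\osV_k(F)^2\cdot\frac{1-p}{M^d p-1},
\]
where the second equality is again \eqref{eq:var_W_infty}; this is finite because $p>M^{-d}$ and positive provided $\osV_k(F)\neq 0$, a property that is either immediate (for $k=d$, $\osV_d(F)=1$) or follows from Theorem~\ref{thm:Vk-limit-general} together with the explicit computations in \cite{KW18}. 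For the covariance, the same factorization gives
\[
  \Cov(Z^k_\infty,Z^\ell_\infty)=\osV_k(F)\,\osV_\ell(F)\cdot \Cov(W_\infty,W_\infty)=\osV_k(F)\,\osV_\ell(F)\cdot \Var(W_\infty),
\]
and substituting \eqref{eq:var_W_infty} yields \eqref{eq:cov}.

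Since the underlying theorem does all the work, there is no genuine obstacle here; the only point that deserves a brief comment is why the variance is strictly positive, which, as indicated, rests on the non-vanishing of $\osV_k(F)$ for $p>M^{-d}$. Everything else is a one-line manipulation of a product of a constant and a square-integrable random variable.
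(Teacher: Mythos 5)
Your proposal is correct and matches the paper's proof exactly: the corollary is obtained by combining the factorization \eqref{eq:factorization} with the known mean and variance of $W_\infty$ in \eqref{eq:var_W_infty}. Your additional remark that strict positivity of the variance requires $\osV_k(F)\neq 0$ is a reasonable extra observation that the paper's one-line proof leaves implicit.
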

 \begin{proof}
   This follows immediately by combining \eqref{eq:factorization} with \eqref{eq:var_W_infty}.
 \end{proof}

 \begin{figure}[t]
   \centering
   \includegraphics[width=0.45\textwidth]{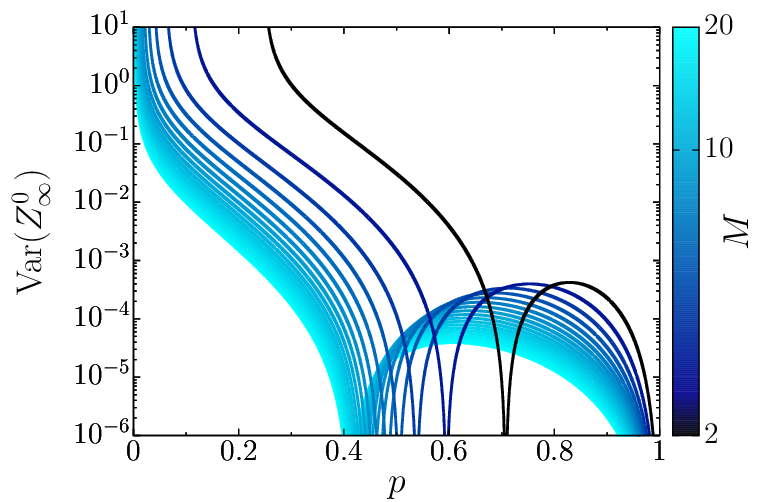}
   \hfill%
   \includegraphics[width=0.45\textwidth]{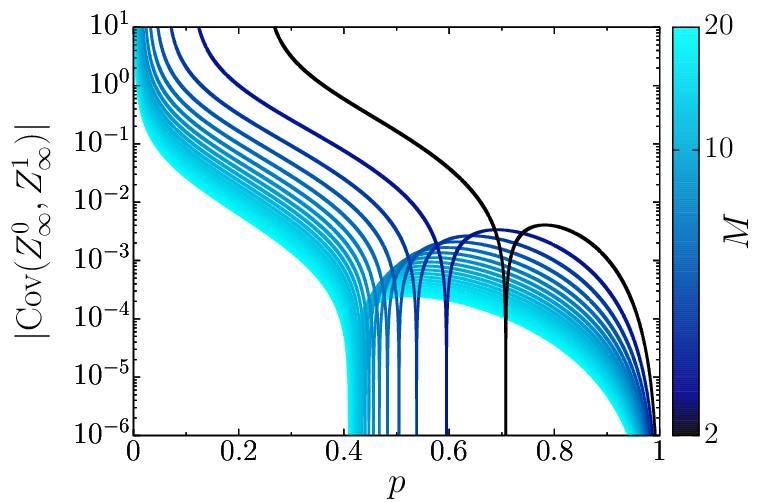}\\
   \includegraphics[width=0.45\textwidth]{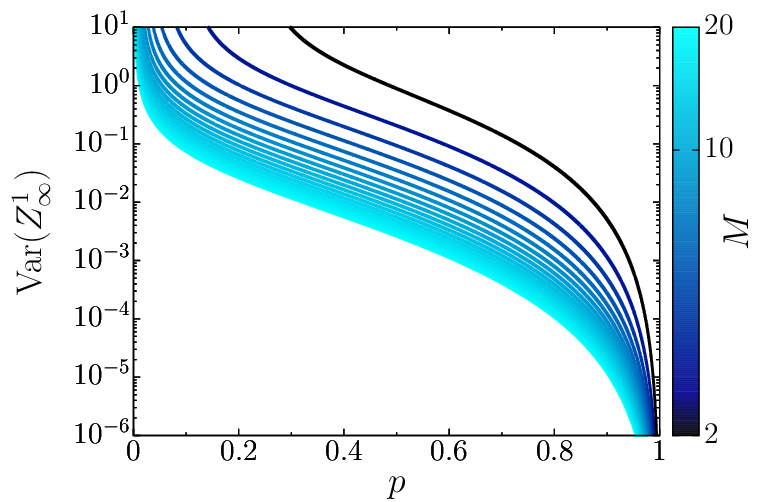}
   \hfill%
   \includegraphics[width=0.45\textwidth]{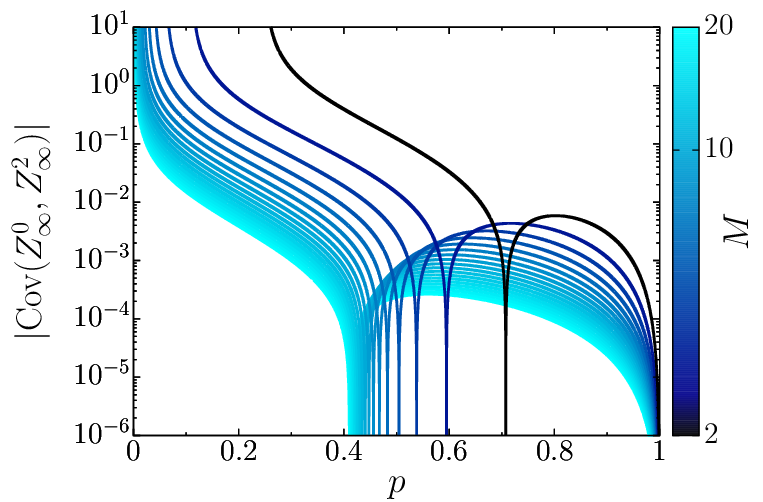}\\
   \includegraphics[width=0.45\textwidth]{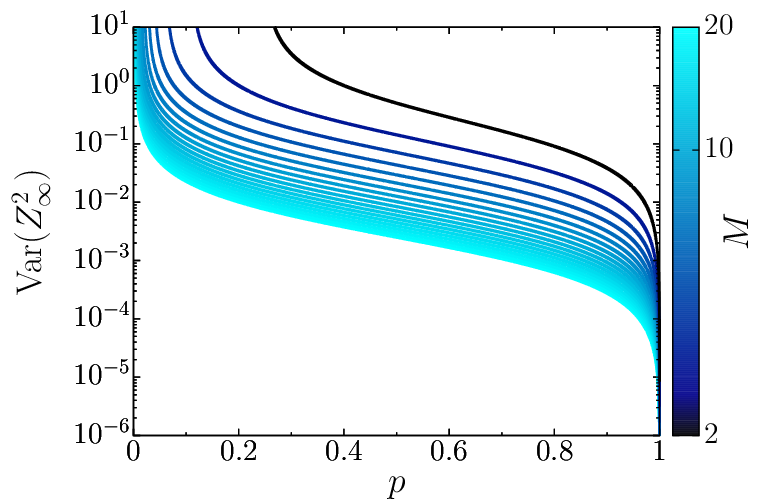}
   \hfill%
   \includegraphics[width=0.45\textwidth]{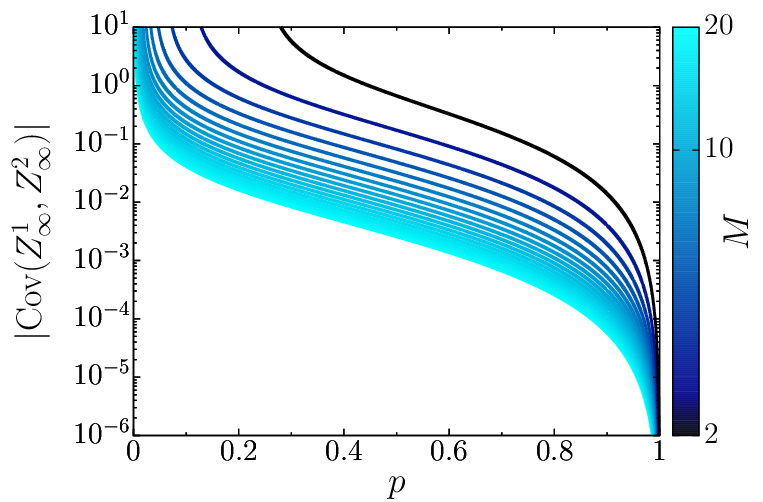}
   \caption{Semi-logarithmic plots of the variances and the absolute values
   of the covariances of $Z^k_{\infty}$ in $\R^2$ as functions of $p$ for different values of $M\leq 20$
   (indicated by the color scale), see \eqref{eq:var} and \eqref{eq:cov}. For $M\to\infty$, all variances and covariances converge to zero.}
   \label{fig:cov}
 \end{figure}

 For plots of variances and covariances of $Z^k_{\infty}$, see
 Fig.~\ref{fig:cov}.
 The proof of Theorem~\ref{thm:as-conv-Vk} is given in Section~\ref{sec:proof} below. We will employ a renewal theorem for branching random walks due to Nerman \cite{Nerman81}, which has been used before in the setting of random self-similar sets by Gatzouras \cite{Gatzouras00} and Z\"ahle~\cite{Z11}. In these papers the studied functionals depend on a continuous parameter (the radius of a parallel set grown around the limit set $F$) which is sent to zero, while in our case the approximation of $F$ is by a discrete sequence, for which we derive below a variant of the renewal theorem, cf.\ Proposition~\ref{prop:RTdiscrete}. While a similar factorization as in Theorem~\ref{thm:as-conv-Vk} has been observed in these previous papers, our somewhat simpler setting allows for explicit computations of the limits. Note also that we do not need any additional integrability or regularity conditions, such as the ones imposed e.g.\ in Z\"ahle~\cite{Z11}.

 \begin{rem}
 From the factorization in Theorem~\ref{thm:as-conv-Vk} it is clear that further progress concerning the distributional properties of the limit variables $Z^k_\infty$ solely depends on understanding the distribution of $W_\infty$, as the expectations $\osV_k(F)$ are known already, cf.\ Theorem~\ref{thm:Vk-limit-general} and \cite{KW18} for more explicit expressions in $\R^2$ and $\R^3$.
  It is clear that $\P(W_\infty=0)=\P(F=\emptyset)$ is strictly positive for all $p\neq 1$, meaning that the distribution of $W_\infty$ (and thus of the $Z^k_\infty$) has an atom at $0$.
 From the more advanced theory of branching processes, it follows that the distribution of $W_\infty$ is absolutely continuous on the open interval $(0,\infty)$, see \cite[Corollary I.12.1]{AN72}.
 \end{rem}

 \begin{rem}
  In \cite{KW18} we have addressed the question, whether percolation thresholds can be approximated by the roots or critical points of mean intrinsic volumes $\osV_k(F_p)$ (as functions of $p$).
The question was motivated by observations in \cite{NMW08,KSM17} for many classical percolation models. Similar questions have been asked for second moments.
Indeed, Last and Ochsenreither \cite{LO14} observed that the variance of the expected Euler characteristic of Poisson-Voronoi percolation in the plane has a maximum at $\frac 12$, which equals the percolation threshold in this model.
Simulations in \cite{KSM17} suggest that for various variants of continuum percolation in $\R^2$ (Boolean models of rectangles) some minimum of the variance of the Euler characteristic as well as of the covariance of Euler characteristic and perimeter are good approximations for the percolation thresholds in these models.
For fractal percolation in the plane we observe that (for any $M$) the local maximum of the variance of the expected Euler characteristic (as a function of $p$) may provide a rather close lower bound for the (unknown) percolation threshold $p_c(M)$, but we have not tried to prove it. Recall from  \cite{CC89} that, as $M\to\infty$, $p_c(M)\searrow p_{c,NN}$, the threshold of nearest-neighbour site percolation. A comparison to the value $p_{c,NN}\approx 0.59274621(13)$ provided by simulations, see e.g.~\cite{NZ00}, indicates that for large $M$ this local maximum could be a tight lower bound for $p_c(M)$, cf.~Fig.~\ref{fig:2}.
However, there are at least two arguments to the contrary to such a close relation: (1) the observed factorization suggests that there is no additional geometric information in the 2nd moments; (2) as discussed in \cite[end of Sect.~2]{KW18} (based on a result in \cite{BCJM13}), the \emph{dust} (i.e., the points not contained in large clusters) may have a dominant influence on the functionals $\osV_k(F_p)$. That is, it is possible that in the supercritical regime these functionals do not see the \emph{backbone} but only the dust.
 \end{rem}

 \begin{figure}[t]
   \centering
   \includegraphics[width=\textwidth]{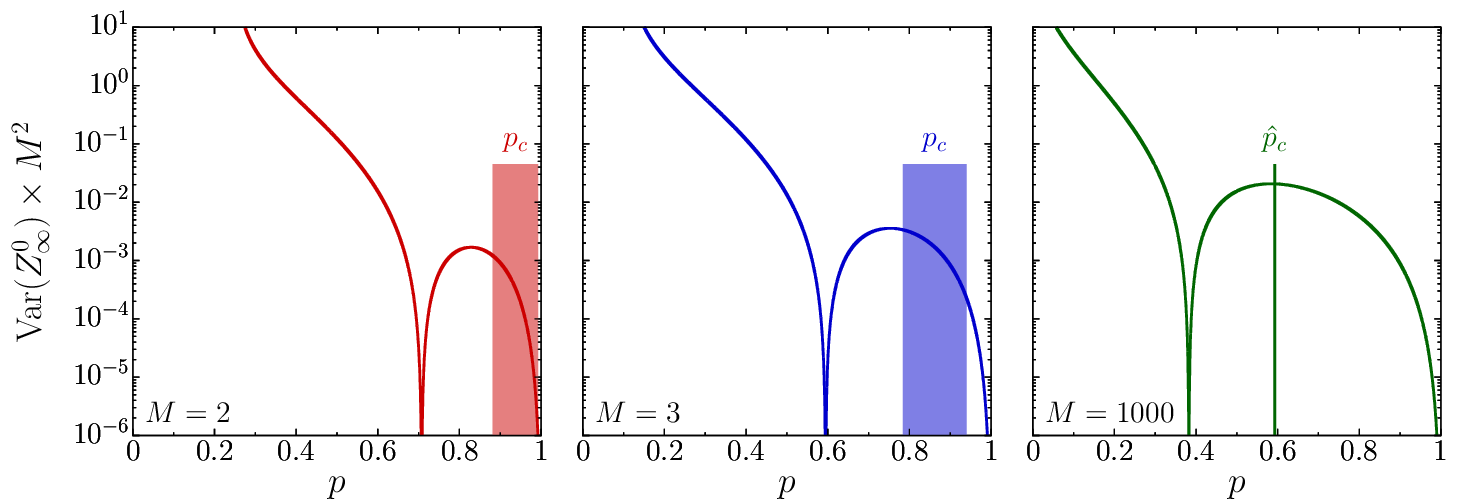}
   \caption{Semi-logarithmic plots of the variance of $Z^0_{\infty}$ rescaled by $M^2$ as functions of $p$ for $M=2$ (left), 3 (center), and 1000 (right). The shaded areas for $M=2$ and 3 indicate rigorously known bounds on the percolation threshold~\cite{Don15} as in \cite[Fig.~4]{KW18}. The vertical line for $M=1000$ indicates an empirical estimate of $p_{c,NN}$ provided by simulations~\cite{NZ00}. }
   \label{fig:2}
 \end{figure}

\section{Additional results: Analysis of the finite approximations} \label{sec:additional}

A priori it is not clear whether the almost sure convergence $Z^k_n\to Z^k_\infty$ obtained in Theorem~\ref{thm:as-conv-Vk} implies the convergence of the expectations $\E Z^k_n$  to the expectation $\E Z^k_\infty$ of the limit variable, as $n\to\infty$, or whether any higher moments converge. However, for the expectations, this convergence can easily be deduced from the above results. Indeed, by Theorem~\ref{thm:Vk-limit-general}, $\E Z^k_n\to \osV_k(F)$ and, by Corollary~\ref{cor:l-indep-fps}, the latter number equals $\E Z^k_\infty$.
In \cite{KW18}, we observed that the convergence, as $n\to\infty$, of the rescaled expected intrinsic volumes $M^{(k-D)n} \E V_k(F_n)=Z^k_n$ to the limit functionals $\osV_k(F)$ is very fast, more precisely their difference decays exponentially. In \cite[Remark~5.2]{KW18} we have explicitly determined the speed of convergence in some cases, which can now be interpreted as the speed of the convergence $\E Z^k_n\to \E Z^k_\infty$. For $d=2$ and $k=0$, for instance, one has $\E Z^0_n-\E Z^0_\infty\sim c (p/M)^n$ as $n\to\infty$ (where $c$ is explicitely known).  But what about higher moments? In Corollary~\ref{cor:l-indep-fps} we have obtained the variances of the limit variables $Z^k_{\infty}$. But do the variances $\Var(Z^k_n)$ converge to $\Var(Z^k_\infty)$? We provide here some explicit expressions for the variances of the $n$-th approximations for some of the functionals, namely for volume and surface area. They allow to deduce that also the variances of the rescaled functionals $Z^k_n=M^{(k-D)n} V_k(F_n)$ converge exponentially fast to the variance of the limit variable, and we provide explicit rates.
We start with the volume.

 \begin{thm} \label{thm:var-vol}
Let $F$ 
be a fractal percolation on $[0,1]^d$ with parameters $M\in\N_{\geq 2}$ and $p\in(0,1]$. 
  Then, for each $n\in\N$, $\E V_d(F_n)=p^{n}$ and
  \begin{align} \label{eq:var-vol}
     \Var(V_d(F_n))=\begin{cases}
        \frac{1-p}{M^dp-1} \left(p^{2n}-\left(\frac{p}{M^d}\right)^n\right),& p\neq M^{-d},\\
        (1-p)\cdot n\cdot p^{2n},& p= M^{-d}.
     \end{cases}
  \end{align}
\end{thm}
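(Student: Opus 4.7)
The plan is to reduce the statement to the already recalled facts about the underlying Galton--Watson process. The key observation is that, by construction, $F_n$ is a union of $N_n$ essentially disjoint basic cubes of level $n$, each of volume $M^{-nd}$. Since the interiors of these cubes are pairwise disjoint and their boundaries have Lebesgue measure zero, we obtain the deterministic identity
\[
  V_d(F_n)=N_n\cdot M^{-nd}.
\]
So $V_d(F_n)$ is, up to the scalar factor $M^{-nd}$, the $n$-th generation size of the Galton--Watson process discussed in Section~\ref{sec:main}, and the whole theorem will follow by plugging in the mean and variance of $N_n$ recalled there.

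For the expectation, the formula $\E N_n=(M^dp)^n$ gives immediately
\[
  \E V_d(F_n)=M^{-nd}\cdot(M^dp)^n=p^n.
\]
For the variance, I would simply compute $\Var(V_d(F_n))=M^{-2nd}\cdot\Var(N_n)$ and substitute formula~\eqref{eq:variance-N-n}. In the case $p\neq M^{-d}$, this yields
\[
  \Var(V_d(F_n))=M^{-2nd}\cdot\frac{1-p}{M^dp-1}\,(M^dp)^n\bigl((M^dp)^n-1\bigr),
\]
and distributing the $M^{-2nd}$ inside the bracket turns the two summands into $p^{2n}$ and $(p/M^d)^n$, which is the claimed expression.

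In the critical case $p=M^{-d}$, I would use the other branch of \eqref{eq:variance-N-n}, namely $\Var(N_n)=n\Var(N_1)=nM^dp(1-p)$. Multiplying by $M^{-2nd}$ and noting that $M^dp=1$ so that $M^{-2nd}=p^{2n}$ turns this into $(1-p)\cdot n\cdot p^{2n}$, matching the second line of~\eqref{eq:var-vol}. There is no real obstacle in this argument: the whole proof is one structural identity followed by substitution of formulas already established in the paper. The only care needed is in the elementary algebraic manipulation and in distinguishing the two cases, which is dictated by the corresponding case split in~\eqref{eq:variance-N-n}.
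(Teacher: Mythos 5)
Your proposal is correct and coincides with the paper's first proof of Theorem~\ref{thm:var-vol}: both rest on the identity $V_d(F_n)=M^{-dn}N_n$ and then substitute the known mean and variance of the Galton--Watson generation sizes from \eqref{eq:variance-N-n}. (The paper additionally offers a second, recursion-based proof as a warm-up for the surface-area case, but that is not needed for the statement itself.)
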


Observe that, for any $n\in\N$, the variance in \eqref{eq:var-vol} is continuous in $p$ even at the value $p=M^{-d}$. Indeed, for any $p$ it can be written as
$$
\Var(V_d(F_n))=p^{2n}(1-p) \sum_{i=0}^{n-1} (M^dp)^{i-n}.
$$
From Theorem~\ref{thm:var-vol} we can deduce the following `\emph{rates of the convergence}' for expectation and variance of the rescaled volume $Z^d_n=M^{(d-D)n} V_d(F_n)$ to expectation and variance, respectively, of the limit $Z^d_\infty$.
\begin{cor} \label{cor:var-vol}
If $p>M^{-d}$ then, as $n\to\infty$, $\E Z^d_n =1 \to 1=\E Z^d_\infty$ and
  \begin{align*}
    \Var(Z^d_\infty) - \Var(Z^d_n) = \frac{1-p}{M^dp-1} (M^dp)^{-n}\to 0. 
  \end{align*}
  If $p\leq M^{-d}$, then $Z^d_n\to 0$ a.s., as $n\to\infty$, while $\E Z^d_n=1\to 1\neq 0=\E Z^d_n$ and $\Var(Z^d_n)\to\infty$.
\end{cor}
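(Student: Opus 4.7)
The plan is to recognise that for $k=d$ the rescaled functional $Z^d_n$ coincides with the martingale $W_n$ of Section~\ref{sec:main}, which collapses the whole corollary to an arithmetic manipulation of Theorem~\ref{thm:var-vol}. Indeed, since $M^D=M^dp$ we have $M^{d-D}=1/p$, and $F_n$ is a union of $N_n$ cubes of volume $M^{-dn}$, so
\begin{align*}
Z^d_n \;=\; M^{(d-D)n}V_d(F_n)\;=\;p^{-n}\cdot N_n\,M^{-dn}\;=\;\frac{N_n}{(M^dp)^n}\;=\;W_n.
\end{align*}
This identity holds for every $p\in(0,1]$, so $\E Z^d_n=\E W_n=1$ and the a.s.\ convergence $Z^d_n\to W_\infty$ follows from~\eqref{eq:def-W_infty} without any restriction on $p$. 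In particular $\osV_d(F)=\lim p^{-n}p^n=1$, so from Theorem~\ref{thm:as-conv-Vk} we get $Z^d_\infty=W_\infty$, and Corollary~\ref{cor:l-indep-fps} gives $\Var(Z^d_\infty)=(1-p)/(M^dp-1)$ in the supercritical case.

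For $p>M^{-d}$ the expectation claim is now trivial. For the variance, I would apply the first formula of Theorem~\ref{thm:var-vol} and divide by $p^{2n}$, factoring $p^{2n}-(p/M^d)^n=p^{2n}\bigl(1-(M^dp)^{-n}\bigr)$. This produces
\begin{align*}
\Var(Z^d_n)\;=\;\frac{1-p}{M^dp-1}\,\bigl(1-(M^dp)^{-n}\bigr),
\end{align*}
from which a direct subtraction yields the rate $\Var(Z^d_\infty)-\Var(Z^d_n)=(1-p)(M^dp-1)^{-1}(M^dp)^{-n}\to 0$ claimed in the corollary.

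For $p\leq M^{-d}$ I would argue via extinction of the underlying Galton--Watson process: in the critical and subcritical regimes $N_n\to 0$ almost surely (in fact $N_n=0$ eventually), hence $W_\infty=0$ a.s., and the identification $Z^d_n=W_n$ immediately gives $Z^d_n\to 0$ a.s.\ while $\E Z^d_n=1$. The divergence of $\Var(Z^d_n)$ is then read off from Theorem~\ref{thm:var-vol} after dividing by $p^{2n}$: in the strictly subcritical case the factor $(M^dp)^{-n}$ blows up and the product $\frac{1-p}{M^dp-1}(1-(M^dp)^{-n})$ tends to $+\infty$ (both factors are negative), while at criticality $p=M^{-d}$ the critical formula gives $\Var(Z^d_n)=(1-p)n\to\infty$ directly.

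There is essentially no serious obstacle; everything reduces to the clean observation $Z^d_n=W_n$, which makes the volume case much simpler than the other intrinsic volumes. The only point requiring a bit of care is the bookkeeping of signs when $p<M^{-d}$ so that one correctly concludes the blow-up is to $+\infty$ rather than $-\infty$.
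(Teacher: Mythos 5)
Your proposal is correct and follows essentially the same route as the paper: both reduce everything to the formulas of Theorem~\ref{thm:var-vol}, divide by $p^{2n}$, identify $\osV_d(F)=1$ so that $Z^d_\infty=W_\infty$ with $\Var(Z^d_\infty)=\frac{1-p}{M^dp-1}$, and handle $p\leq M^{-d}$ via almost sure extinction of the underlying Galton--Watson process. The explicit observation $Z^d_n=W_n$ is a tidy way of packaging what the paper does implicitly (it uses $V_d(F_n)=M^{-dn}N_n$ in the first proof of Theorem~\ref{thm:var-vol}), and your sign bookkeeping in the subcritical case is right.
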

\begin{proof}
   Let first $p>M^{-d}$. Since $M^D=M^dp$, we infer from Theorem~\ref{thm:var-vol} for any $n\in\N$ that $\E Z_n^d=p^{-n}\E V_d(F_n)=1$ and
   $$
   \Var(Z^d_n)=\Var(p^{-n} V_d(F_n))=p^{-2n} \Var(V_d(F_n))=\frac{1-p}{M^dp-1} \left(1-\left(M^dp\right)^{-n}\right).
   $$
   Moreover, by Theorem~\ref{thm:Vk-limit-general}, we have $\osV_d(F)=1$ and thus we conclude from Corollary~\ref{cor:l-indep-fps} and \eqref{eq:var_W_infty}, that $\E Z^d_\infty=\osV_d(F) \E W_\infty=1$ and $\Var(Z^d_\infty)=\frac{1-p}{M^dp-1}$. From this the asserted convergence of expectation and variance are obvious.

   If $p\leq M^{-d}$, then $F=\emptyset$ almost surely. Moreover, it is clear from the definition of $F$ that $F(\omega)=\emptyset$ for some $\omega\in\Omega$, if and only if there is some $m=m(\omega)\in\N$ such that $F_m(\omega)=\emptyset$, and in this case one has $Z^d_n(\omega)\to 0$, as $n\to\infty$. Hence $Z_n^d\to 0$ a.s., as $n\to \infty$ as stated. Now the last assertions are obvious from \eqref{eq:var-vol}.
\end{proof}
In the terminology of numerical analysis, Corollary~\ref{cor:var-vol} says that the variances converge linearly with rate $(M^dp)^{-1}<1$ whenever $p>M^{-d}$. This corresponds to our observations in simulations of the variables $Z_n^d$ for different parameters $p$ and $M$.
 In Section~\ref{sec:var_vol}, we will provide two proofs of Theorem~\ref{thm:var-vol}. The first one uses a standard branching process argument based on the observation that the volume of $F_n$ in this model is directly related to the number of offspring in the $n$-th generation of the associated Galton-Watson process. The second one uses a recursion argument, which generalizes to other intrinsic volumes and is a warm-up for the more involved discussion of the surface area.

For the surface area of $F_n$, we obtain the following explicit expressions. Here we concentrate on the case $p>M^{-d}$, when the limit set $F$ is nontrivial. Recall that $V_{d-1}$ is, in fact, half the surface area.

\begin{thm} \label{thm:var-surf}
Let $F$ 
be a fractal percolation on $[0,1]^d$ with parameters $M\in\N_{\geq 2}$ and $p\in(0,1]$. 
  Then, for each $n\in\N$,
  \begin{align} \label{eq:exp-surf-Exp}
  \E V_{d-1}(F_n)&= \bar c_1 \cdot (Mp)^n \left(1+\frac{M-1}{1-p}\left(\frac pM\right)^{n+1}\right),
  \end{align}
  where $\bar c_1:=\frac {dM(1-p)}{M-p}=\osV_{d-1}(F)$. Moreover, if $p>M^{-d}$, then, as $n\to\infty$,
  \begin{align}
   \label{eq:exp-surf-Var}
        \Var(V_{d-1}(F_n))=&\bar c_2\cdot (Mp)^{2n} + 
  \begin{cases}
    O((Mp^3)^n),& \text{if } p^2>1/M^{d-1},\\
    O((Mp^3)^n \cdot n), & \text{if } p^2=1/M^{d-1},\\
    O(\left(\frac{p}{M^{d-2}}\right)^n), & \text{if } p^2<1/M^{d-1},
  \end{cases} 
  \end{align}
  where $\bar c_2:=\bar c_1^2\cdot \frac{1-p}{M^dp-1}=(\osV_{d-1}(F))^2 \Var(W_\infty)$.
 \end{thm}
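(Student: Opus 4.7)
I would extend the recursion strategy used for the volume to $V_{d-1}$, with the essential new feature that $V_{d-1}$ is not additive across the $M^d$ level-one subcubes of $J$. Setting $F_n^j:=F_n\cap J_j$, the polyconvex inclusion--exclusion truncates after the adjacent-pair terms (three or more distinct $J_j$ only meet in sets of dimension at most $d-2$, on which $V_{d-1}$ vanishes), giving
$$V_{d-1}(F_n)=\sum_{j=1}^{M^d} V_{d-1}(F_n^j)-\sum_{(i,j)\text{ adj}} V_{d-1}(F_n^i\cap F_n^j),$$
where the second sum runs over the $dM^{d-1}(M-1)$ pairs sharing a $(d-1)$-face. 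Conditional on the retentions $X_j$, each $F_n^j$ is a $(1/M)$-scaled independent copy of $F_{n-1}$, and each $F_n^i\cap F_n^j$ (for adjacent $(i,j)$ with both retained) is, after rescaling by $M$, the intersection of two independent traces of $F_{n-1}$ on a common face of $J$. A key observation is that such a trace is itself a $(d-1)$-dimensional fractal percolation at step $n-1$, so Theorem~\ref{thm:var-vol} applied in dimension $d-1$ supplies its first two moments.

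For the expectation, two independent traces yield expected intersection volume $p^{n-1}\cdot p^{n-1}=p^{2(n-1)}$, producing the recursion $E_n=MpE_{n-1}-d(M-1)p^{2n}$ with $E_0=d$; its explicit solution gives the stated closed form with prefactor $\bar c_1=dM(1-p)/(M-p)=\osV_{d-1}(F)$. For the variance I would split $V_{d-1}(F_n)=A_n-B_n$ with $A_n$ and $B_n$ the two sums above, so that $\Var V_{d-1}(F_n)=\Var A_n+\Var B_n-2\Cov(A_n,B_n)$. The piece $\Var A_n$ is tractable because different subcubes are independent: a one-step conditioning on $X_1$ yields
$$\Var A_n=\frac{p}{M^{d-2}}\Var V_{d-1}(F_{n-1})+\frac{p(1-p)}{M^{d-2}}E_{n-1}^2,$$
whose inhomogeneity at rate $(Mp)^{2(n-1)}$ (from $E_{n-1}^2\sim\bar c_1^2(Mp)^{2(n-1)}$) forces the leading coefficient to be $\bar c_2=\bar c_1^2(1-p)/(M^dp-1)$, in agreement with the factorization in Theorem~\ref{thm:as-conv-Vk}.

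The subleading analysis requires handling $\Var B_n$ and $\Cov(A_n,B_n)$, which, because adjacent index pairs can share a common index, involve joint moments of two traces of a single $F_n^j$ on different faces; these satisfy auxiliary recursions of the same type. Together with the trace variance $\Var V_{d-1}(T_n)=\frac{1-p}{M^{d-1}p-1}(p^{2n}-(p/M^{d-1})^n)$ from Theorem~\ref{thm:var-vol} in dimension $d-1$, these assemble into a finite linear system in the needed second moments whose characteristic rates include $(Mp)^2$ (leading), $Mp^3$ (inherited from the cross term $(Mp)^n\cdot p^{2n}$ in $E_{n-1}^2$ fed through the recursion) and the homogeneous rate $p/M^{d-2}$ of $\Var A_n$. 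Since the ratio $Mp^3/(p/M^{d-2})=M^{d-1}p^2$ is exactly the quantity governing the case split, the three regimes correspond to the sign of $M^{d-1}p^2-1$, with coincidence producing the announced $n$-factor resonance. The main obstacle is the combinatorial bookkeeping: setting up and closing the finite-dimensional linear system that captures all cross-covariances among the traces of different subcubes, and then carefully comparing eigenvalues to extract the correct subleading rate in each of the three regimes.
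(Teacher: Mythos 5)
Your proposal follows essentially the same route as the paper: the same inclusion--exclusion over level-one subcubes truncated at adjacent pairs, the same identification of face traces as $(d-1)$-dimensional fractal percolations (so that the volume result in dimension $d-1$ supplies the moments of the intersection terms), the same recursion $E_n=MpE_{n-1}-d(M-1)p^{2n}$ for the expectation, and the same variance decomposition with the $\Var A_n$ recursion at rate $p/M^{d-2}$ driven by $(Mp)^{2n}$, with the remaining cross-covariances handled by auxiliary recursions whose competing rates $Mp^3$ versus $p/M^{d-2}$ produce the three regimes via the sign of $M^{d-1}p^2-1$. The paper carries out the bookkeeping you defer (distinguishing the in-a-row and corner configurations of shared-index pairs and the single-versus-pair covariance), but the strategy and all key identifications coincide.
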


\begin{cor} \label{cor:var-surf} Let $p>M^{-d}$. Then, for any 
$n\in\N$,
\begin{align*}
  \E Z^{d-1}_\infty - \E Z^{d-1}_n = \frac {dp(M-1)}{M-p} \left(\frac pM\right)^{n},
\end{align*}
and, as $n\to\infty$, 
  \begin{align*}
    \Var(Z^{d-1}_\infty) - \Var(Z^{d-1}_n) =
  \begin{cases}
    O(\left(\frac pM\right)^n),& \text{if } p^2>1/M^{d-1},\\
    O(\left(\frac pM\right)^n \cdot n), & \text{if } p^2=1/M^{d-1},\\
    O(\left(M^{d}p\right)^{-n}), & \text{if } p^2<1/M^{d-1}.
  \end{cases}
  \end{align*}
  In particular, $\Var(Z^{d-1}_n) \to \Var(Z^{d-1}_\infty)$, as $n\to\infty$.
\end{cor}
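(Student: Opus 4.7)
The plan is to read off both the limit and the error terms directly from Theorem~\ref{thm:var-surf}. The first observation is that $D$ is defined by $M^D=M^dp$, so the rescaling exponent satisfies $M^{(d-1-D)n}=(Mp)^{-n}$. Consequently $\E Z^{d-1}_n=(Mp)^{-n}\E V_{d-1}(F_n)$ and $\Var(Z^{d-1}_n)=(Mp)^{-2n}\Var(V_{d-1}(F_n))$. On the limit side, the corollary following Theorem~\ref{thm:as-conv-Vk} together with \eqref{eq:var_W_infty} gives $\E Z^{d-1}_\infty=\osV_{d-1}(F)=\bar c_1$ and $\Var(Z^{d-1}_\infty)=\bar c_2$.

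For the expectation, I would substitute \eqref{eq:exp-surf-Exp} into the rescaling identity to obtain
\begin{align*}
  \E Z^{d-1}_n=\bar c_1+\bar c_1\cdot\frac{M-1}{1-p}\left(\frac{p}{M}\right)^{n+1},
\end{align*}
and then simplify the second summand using $\bar c_1=dM(1-p)/(M-p)$. The $(1-p)$ factors cancel, leaving $\frac{dp(M-1)}{M-p}(p/M)^n$, which matches (up to sign) the expression for $\E Z^{d-1}_\infty-\E Z^{d-1}_n$ asserted in the corollary.

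For the variance the strategy is identical: divide \eqref{eq:exp-surf-Var} by $(Mp)^{2n}$. The leading term $\bar c_2(Mp)^{2n}$ becomes precisely $\bar c_2=\Var(Z^{d-1}_\infty)$, and each of the three $O$-terms must be rescaled separately. A direct exponent count gives $(Mp)^{-2n}(Mp^3)^n=(p/M)^n$ in the regime $p^2>1/M^{d-1}$, the same bound with an extra factor $n$ when $p^2=1/M^{d-1}$, and $(Mp)^{-2n}(p/M^{d-2})^n=(M^dp)^{-n}$ when $p^2<1/M^{d-1}$. These are exactly the three rates for $\Var(Z^{d-1}_\infty)-\Var(Z^{d-1}_n)$ stated in the corollary.

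The final convergence claim $\Var(Z^{d-1}_n)\to\Var(Z^{d-1}_\infty)$ then follows because each rate tends to $0$ under the standing assumption $p>M^{-d}$: since $M\ge 2$ and $p\le 1$ we have $p/M<1$, so the first two rates vanish, and $M^dp>1$ disposes of the third. The whole argument is essentially bookkeeping of exponents; the only mildly delicate point is remembering to exploit the explicit form of $\bar c_1$ in order to see the $(1-p)$ cancellation in the expectation formula, and to match the three error regimes with the three regimes appearing in Theorem~\ref{thm:var-surf}. There is no genuine obstacle at this stage, as all the analytic work has been carried out in the proof of that theorem.
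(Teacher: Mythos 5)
Your proposal is correct and follows essentially the same route as the paper: rescale the expectation and variance formulas of Theorem~\ref{thm:var-surf} by $(Mp)^{-n}$ and $(Mp)^{-2n}$ respectively, identify the limits $\bar c_1$ and $\bar c_2$ via the covariance corollary for $Z^k_\infty$, and count exponents to convert the three error regimes. Your parenthetical ``up to sign'' is apt: since $\E Z^{d-1}_n=\bar c_1\bigl(1+\tfrac{M-1}{1-p}(p/M)^{n+1}\bigr)>\bar c_1=\E Z^{d-1}_\infty$ for $p<1$, the difference as displayed in the corollary should carry a minus sign (or be written as $\E Z^{d-1}_n-\E Z^{d-1}_\infty$), a point the paper's own proof passes over by declaring the formula ``obvious.''
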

\begin{proof}
   Since $M^{d-1-D}=(Mp)^{-1}$, we infer from Theorem~\ref{thm:var-surf} that
   $$
   \E Z^{d-1}_n=(Mp)^{-n}\E V_{d-1}(F_n)=\bar c_1 
   \left(1+\frac{M-1}{1-p}\left(\frac pM\right)^{n+1}\right)
   $$
   for any $n\in\N$. As noted above, Theorem~\ref{thm:Vk-limit-general} and Corollary~\ref{cor:l-indep-fps} imply that $\E Z^{d-1}_n \to \E Z^{d-1}_\infty$, as $n\to\infty$, for any $p>M^{-d}$, and therefore we conclude $\E Z^{d-1}_\infty=\bar c_1$ (which can also be deduced directly from Thm.~\ref{thm:Vk-limit-general}). The stated formula for the difference $\E Z^{d-1}_\infty - \E Z^{d-1}_n$ is now obvious. Moreover, from \eqref{eq:exp-surf-Var} we infer that
   \begin{align*}
     \Var(Z^{d-1}_n)
   &=(Mp)^{-2n} \Var(V_{d-1}(F_n))
   =\bar c_2 + \begin{cases}
    O(\left(\frac pM\right)^n),& \text{if } p^2>1/M^{d-1},\\
    O(\left(\frac pM\right)^n \cdot n), & \text{if } p^2=1/M^{d-1},\\
    O(\left(M^{d}p\right)^{-n}), & \text{if } p^2<1/M^{d-1},
  \end{cases}
   \end{align*}
   which implies in particular that $\Var(Z^{d-1}_n)\to \bar c_2$, as $n\to\infty$. From Corollary~\ref{cor:l-indep-fps} we see that $\Var(Z^{d-1}_\infty)=(\osV_k(F))^2\cdot \Var(W_\infty)=\bar c_2$. Hence $\Var(Z^{d-1}_n)\to \Var(Z^{d-1}_\infty)$, as $n\to\infty$, and the stated order of convergence for the difference follows at once.
\end{proof}

\section{Random iterated function systems and a renewal theorem for branching random walks} \label{sec:2}

Fractal percolation in $\R^d$ 
can be viewed as a random self-similar set, i.e., as a random compact set generated by a 
random iterated function system (RIFS) $\sS$. 

The general definition of such RIFSs is as follows, cf.~\cite{Falconer86,MW86,Graf87}. Let $J\subset\R^d$ be a compact set such that $J=\overline{\intr(J)}$ and let $\Sim$ be some family of contracting similarity mappings on $J$. Then an RIFS $\sS$ on $J$ is defined to be a random subset of $\Sim$, which is almost surely finite and satisfies the uniform open set condition with respect to $\intr(J)$. That is, there is a random variable $\nu$ with values in $\N_0=\{0\}\cup\N$ and random elements $\Phi_i\in\Sim$, $i=1,\ldots,\nu$ such that $\sS=\{\Phi_1,\Phi_2,\ldots, \Phi_\nu\}$, if $\nu>0$, and $\sS:=\emptyset$, if $\nu=0$.
Moreover, $\sS$ is said to satisfy the \emph{uniform open set condition} (UOSC) with respect to $\intr(J)$, if
\begin{align} \label{eq:OSC}
  \bigcup_{i=1}^\nu \Phi_i(\intr(J))\subset \intr(J) \quad \text{ and } \quad \Phi_i(\intr(J))\cap \Phi_j(\intr(J))=\emptyset, \quad i\neq j,
\end{align}
holds with probability 1. Additionally, we assume throughout that $\nu$ satisfies
\begin{align*}
  0<\E\nu<\infty.
\end{align*}

Given an RIFS $\sS$, a random fractal set $F$ can be associated to it by constructing a Galton-Watson tree on the code space $\N^*:=\bigcup_{n=0}^\infty \N^n$ of all finite words with letters in $\N$. Here $\N^0:=\{\eps\}$, where $\eps$ is the \emph{empty word}. For any word $\sigma\in\N^n$, $|\sigma|:=n$ will denote its \emph{length}. Moreover, for $\sigma,\tau\in\N^*$, we write $\sigma\tau$ for the concatenation of $\sigma$ and $\tau$.
For each $\sigma\in\N^*$, let $\sS_\sigma$ be a copy of the RIFS $\sS$ generated in its own probability space $(\Omega_\sigma,\mathcal{A}_\sigma,\P_\sigma)$. Let $(\Omega,\mathcal{A},\P):=\bigotimes_{\sigma\in\N^*} (\Omega_\sigma,\mathcal{A}_\sigma,\P_\sigma)$ be the common probability space in which all these RIFS are independent.
Recall that $\sS_\sigma$ contains a random number $\nu_\sigma$ of maps. 
To distinguish them, let $I_\sigma\subseteq \N$ be a set of indices with cardinality $|I_\sigma|=\nu_\sigma$. It is convenient to denote the maps in $\sS_\sigma$ by $\Phi_{\sigma i}$, $i\in I_\sigma$. Note that $\nu_\sigma$ may be $0$ in which case $I_\sigma=\emptyset$. (In general, one could choose $I_\sigma=\{1,\ldots,\nu_\sigma\}$ here without loss of generality, but later in the case of fractal percolation it will be much more convenient to use different index sets.)
We build a random tree $\sT$ in $\N^*$ as follows:
set $\sT_0:=\{\eps\}$ and define, for $n\in\N_0$, $\sT_{n+1}:=\emptyset$, if $\sT_n=\emptyset$, and
$$
\sT_{n+1}:=\{\sigma i: \sigma\in\sT_n, i\in I_\sigma\},
$$
if $\sT_n\neq\emptyset$. Finally, let
$$
\sT_*:=\bigcup_{n=0}^\infty \sT_n
$$
be the vertex set of the tree $\sT$ and define the edge set by
$$
E(\sT):=\{(\sigma,\sigma i):\sigma\in\sT_*, i\in I_\sigma\}.
$$
$\sT$ can be interpreted as the population tree of a \emph{Galton-Watson process} in which $\sT_n$ represents the $n$-th generation and $\sigma i\in\sT_{n+1}$, $i\in I_\sigma$ are the descendants of an individual $\sigma\in\sT_n$. The distribution of $\nu$ is called the \emph{offspring distribution} of the process. For any finite word $\sigma\in\N^n$ and any $k\in\N$, $k\leq n$, write $\sigma|k$ for the word of the first $k$ letters of $\sigma$.
With this notation, the self-similar random set $F$ associated with the RIFS $\sS$ is defined by
\begin{align} \label{eq:F-def}
F:=\bigcap_{n=1}^\infty \bigcup_{\sigma\in\sT_n} J_\sigma,
\end{align}
where, 
\begin{align}\label{eq:J_sigma-def}
   J_\sigma:=\overline{\Phi}_\sigma(J):=\Phi_{\sigma|1}\circ \Phi_{\sigma|2}\circ\ldots\circ\Phi_{\sigma|n-1}\circ \Phi_\sigma(J)
\end{align}
are the \emph{cylinder sets} of the construction.


\paragraph{\bf Fractal percolation as a random self-similar set} Before we continue the general discussion of RIFS, let us briefly indicate how
fractal percolation $F$ on $[0,1]^d$ with parameters $M\in\N_{\geq 2}$ and $p\in(0,1]$ fits into this setting. Choose $J:=[0,1]^d$ as the basic set. Recall the basic cubes $J_1,\ldots,J_{M^d}$ of sidelength $1/M$ into which $J$ is subdivided in the first step of the construction of $F$. %
Let $\Sim':=\{\varphi_1,\ldots, \varphi_{M^d}\}$, where for $j=1,\ldots, M^d$, $\varphi_j$ is the similarity which maps $J$ to $J_j$ (rotation and reflection free, for simplicity and uniqueness).
Let $\sS$ be the random subset of $\Sim'$ such that each of the maps $\varphi_j$ is contained in $\sS$ with probability $p$ independently of all the other maps in $\Sim'$. It is obvious that $\sS$ satisfies the UOSC with respect to the interior of $J$. Indeed, $\sS$ is a random subset of $\Sim'$ and any subset of $\Sim'$ satisfies condition \eqref{eq:OSC} with respect to $\intr(J)$. Now fractal percolation with parameters $M\in\N_{\geq 2}$ and $p\in(0,1]$ is the random self-similar set $F$ (defined by \eqref{eq:F-def}) generated by this particular RIFS $\sS$. Note that $\sS$ contains at most $M^d$ maps. One can therefore reduce the code space to $\Sigma^*:=\bigcup_{n\in\N_0} \Sigma^n$, where $\Sigma:=\{1,2,\ldots,M^d\}$. If we choose the index set $I_\sigma$ to contain exactly those indices $i\in\Sigma$ for which $\varphi_i\in \sS_\sigma$ and if we set $\Phi_{\sigma i}:=\varphi_i$,  then the cylinder sets $J_\sigma$, $\sigma=\sigma_1\ldots\sigma_n\in \Sigma^n$, defined in \eqref{eq:J_sigma-def} are more conveniently given by
\begin{align}\label{eq:J_sigma-def2}
   J_\sigma:=\varphi_{\sigma_1}\circ \ldots\circ\phi_{\sigma_n}(J).
\end{align}
They correspond to the basic cubes of level $n$ used in the previous sections. Note also that for $n=1$ this is consistent with the above notation $J_j$, $j\in\Sigma$ for the first level cubes.
In the language of the tree and the associated sets considered above, the construction steps $F_{n}$, $n\in\N$ of the fractal percolation process are given by
$$
F_{n}=\bigcup_{\sigma\in\sT_n} J_\sigma.
$$
For each $j\in\Sigma$ and each $n\in\N$, the random set $F_n^j$ introduced just before  Theorem~\ref{thm:Vk-limit-general} is then given by
\begin{align} \label{eq:Fj_ndef}
F_n^j=\bigcup_{{\sigma\in\sT_n},{\sigma|1=j}} J_\sigma.
\end{align}


\paragraph{\bf A branching random walk associated with $F$.} Let us go back to general RIFS. To any random self-similar set $F$, a branching random walk $\{S_\sigma: \sigma\in\N^*\}$ can naturally be associated. It controls the size of the cylinder sets of $F$ (i.e., of the basic cubes in the case of fractal percolation), by keeping track of the contraction ratios applied during the construction. In general, it is defined recursively by setting $S_\varepsilon:=0$ and, for $n\in\N$ and any $\sigma=\sigma_1\ldots\sigma_n\in\sT_n$ by
$$
S_{\sigma}:=S_{\sigma|n-1}+\log r^{-1}_{\sigma},
$$
where $r_\sigma$ is the contraction ratio of the similarity $\Phi_\sigma$. It is convenient to set $S_{\sigma}:=\infty$ for $\sigma\in \N^*\setminus\sT$. Then $\{S_\sigma: \sigma\in\N^*\}$ is a branching random walk with positive step sizes.
In the case of fractal percolation $F$ in $\R^d$ with subdivision parameter $M\geq 2$, 
all contraction ratios (of all $\Phi_\sigma$, $\sigma\in\sT)$ equal $1/M$.
Therefore, we have in this case, for any $n\in\N$, 
$$
S_\sigma=\begin{cases}
   n\cdot \log M,  & \sigma\in\sT_n,\\
   +\infty, & \text{otherwise. }
\end{cases}
$$

Let $\xi$ be the random measure on $\R$ defined by
\begin{align}
  \label{eq:xi} \xi(B):=\sum_{\sigma\in\sT_1} \ind_B(S_\sigma)
\end{align}
for any Borel set $B\subset\R$, and let $\mu:=\E[\xi(\cdot)]$ be the intensity measure of $\xi$. The random measure $\xi$ is called \emph{lattice}, if $\mu$ 
is concentrated on $\lambda \Z$ for some $\lambda>0$ (we also say $\xi$ is \emph{lattice with lattice constant} $\lambda$ in this case), and $\xi$ is called \emph{nonlattice} otherwise. Observe that in case of fractal percolation with subdivision parameter $M$, the intensity measure $\mu$ is concentrated on the value $\log M$ and so $\xi$ is clearly lattice with lattice constant $\lambda=\log M$.

For a general RIFS $\sS$, recall that $\nu=\#\sS=\#\sT_1$. Assume from now on that $$
1<\E\nu<\infty.$$ Denote by $D\in\R$ the number determined uniquely by the equation
$$
\E\left(\sum_{i=1}^\nu r_i^D\right)=1.
$$
Here $r_i$ denotes the contraction ratio of the mapping $\Phi_i$ in $\sS$.
It is well known that conditioned on $F\neq\emptyset$ the Hausdorff and Minkowski dimension of $F$ equal $D$ almost surely, cf.\ \cite{Falconer86, Graf87,MW86, Patzschke97}. Note that in case of fractal percolation in $\R^d$ with parameters $M$ and $p$, the assumption $\E\nu>1$ equals the condition $p>M^{-d}$ and the above $D$ equals the number given in \eqref{eq:dimF}.
Furthermore, we set
$$
m(D):=\E\left(\sum_{i\in\sT_1} |\log r_i| r_i^D\right).
$$
To the branching random walk $\{S_\sigma: \sigma\in\N^*\}$ we can associate the following nonnegative martingale $(W_n)_{n\in\N_0}$, which in the case of fractal percolation specializes to the one defined after equation \eqref{eq:variance-N-n}:
\begin{align}
  W_n:= \sum_{\tau\in\sT_n} e^{-D S_\tau}, \quad n\in\N_0.
\end{align}
Indeed, for fractal percolation with parameters $M$ and $p$, we have $S_\sigma=n \log M$ for $\sigma\in\sT_n$, and $|\sT_n|=N_n$ and thus $W_n = \sum_{\tau\in\sT_n} M^{-D n} = (M^dp)^{-n}N_n = N_n/\E N_n$.

Also in general, $(W_n)_n$ is a nonnegative martingale (with respect to the filtration $(\hat{\mathcal{F}}_n)_n$ where $\hat{\mathcal{F}}_n$ is the product $\sigma$-algebra generated by the $\mathcal{F}_\sigma$ with $|\sigma|\leq n$) with $\E W_n=1$ for each $n\in\N$ and, by the martingale convergence theorem, the almost sure limit
\begin{align}
  W_\infty:=\lim_{n\to\infty} W_n
\end{align}
is well defined. Moreover, by Biggins' Theorem \cite{biggins77}, see also \cite[Thm.~3.3]{Gatzouras00}, $W_\infty$ is nontrivial, i.e., $\P(W_\infty=0)<1$, if and only if $\E[W_1 \log^+W_1]<\infty$. (Note that this condition is satisfied for fractal percolation for all parameters such that $p>M^{-d}$.)

In \cite{Gatzouras00}, a renewal theorem for branching processes $Z_t$, $t\in\R$ associated to the random walk $S=\{S_\sigma: \sigma\in\N^*\}$  has been formulated, which is based on and extends a result of Nerman \cite{Nerman81}.
First we recall the theorem from \cite{Gatzouras00}, see also \cite{Z11}. Then we will reformulate this statement in order to apply it to some discrete processes.

Recall that the underlying probability space is 
$(\Omega,{\mathcal{F}},\P) =\prod_{\sigma\in\N^*} (\Omega_\sigma,{\mathcal{F}}_\sigma,\P_\sigma)$, where the components $(\Omega_\sigma,{\mathcal{F}}_\sigma,\P_\sigma)$ are identical and each of them generates an RIFS which, loosely speaking, determines the offspring of $F_\sigma$, i.e.\ the sets $F_{\sigma i}$, $i=1,\ldots,\nu_\sigma$. The tree $\sT_*$ induces a tree structure to the elements of $\Omega$ and it is easy to see that any subtree rooted at some $\tau\in\sT_*$ and containing all words starting with $\tau$ is equivalent in distribution to the full tree $\sT_*$. Using only the components of $\omega\in\sT_*$ determined by this subtree, we can generate a random self-similar set $F^{(\tau)}$ which is equal in distribution to $F$.  
More formally, define for each $\tau\in\N^*$ the shift operator $\theta_\tau:\Omega\to\Omega$ by
\begin{align*}
  (\theta_\tau\omega)_\sigma:= \omega_{\tau\sigma}.
\end{align*}
Since the images $\theta_\tau(\omega)$ are again in $\Omega$, we can define for any random variable $X$ on $\Omega$ (taking values in some arbitrary space $E$) a whole family of random variables $\{X^{(\tau)}: \tau\in\N^*\}$ by $X^{(\tau)}(\omega):=X(\theta_\tau\omega)$, $\omega\in\Omega$. They have the property that $X^{(\tau)}\deq X$ for any $\tau\in\N^*$.
In particular, we define the random set $F^{(\tau)}$ by
\begin{align}
   \label{eq:F-tau}
   F^{(\tau)}(\omega):=F(\theta_\tau\omega), \qquad \omega\in\Omega.
\end{align}
It satisfies  $F^{(\tau)}\deq F$ for any $\tau\in\N^*$.
Similarly, we define for any stochastic process $Y:=\{Y_t: t\in\R\}$ on $(\Omega,{\mathcal{F}},\P)$ a family of i.i.d.\ copies of $Y$ by $Y_t^{(\tau)}(\omega):= Y_t(\theta_\tau\omega)$, $\omega\in\Omega, \tau\in\N^*$. Then the \emph{branching process} associated with $S$ and $Y$ is defined by
\begin{align}
  Z_t:=\sum_{\sigma\in\sT_*} Y^{(\sigma)}_{t-S_\sigma}.
\end{align}
Now we are ready to recall the relevant part of the Nerman-Gatzouras renewal theorem.

\begin{thm}\cite[Thm 3.4, lattice case]{Gatzouras00}\label{thm:nerman-rt}\\
Let $\{Y_t: t\in\R\}$ be a stochastic process on $(\Omega,{\mathcal{F}},\P)(=\prod_{\sigma\in\N^*} (\Omega_\sigma,{\mathcal{F}}_\sigma,\P_\sigma))$, which is
continuous a.e.\ with probability $1$
and takes values in the space of functions 
which vanish on $(-\infty,0)$. Assume there exists a non-increasing and integrable function $h:[0,\infty)\to(0,\infty)$, such that
\begin{align}
\label{eq:rt-cond}
  \E\left[\sup_{t\geq 0} \frac{e^{-Dt}|Y_t|}{h(t)}\right]<\infty.
\end{align}
Assume further that the random measure $\xi$ (defined in \eqref{eq:xi}) is lattice with lattice constant $\lambda>0$ and let $s\in[0,\lambda)$. Then, as $n\to\infty$, almost surely
\begin{align*}
  e^{-Dn\lambda} Z_{s+\lambda n}= e^{-Dn\lambda} \sum_{\sigma\in\sT_*} Y^{(\sigma)}_{s+n\lambda-S_\sigma}\longrightarrow \frac{\lambda W_\infty}{m(D)} \sum_{n=0}^\infty e^{-Dn\lambda}\E[Y_{n\lambda+s}].
\end{align*}
\end{thm}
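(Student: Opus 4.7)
The plan is to reduce the statement to two classical ingredients: Blackwell's lattice renewal theorem on $\R$ (for the deterministic limit) and a conditional second-moment concentration argument against the martingale $W_k\to W_\infty$. For any fixed $k\in\N$, splitting $\sT_*$ into the strict ancestors of the $k$-th generation and the subtrees rooted in $\sT_k$, together with the fact that $S_\sigma\in\lambda\Z_{\geq 0}$ in the lattice case, yields
\begin{align*}
e^{-Dn\lambda}Z_{s+n\lambda} \;=\; e^{-Dn\lambda}\sum_{|\sigma|<k}Y^{(\sigma)}_{s+n\lambda-S_\sigma} \;+\; \sum_{\sigma\in\sT_k}e^{-DS_\sigma}\,e^{-D(n-m_\sigma)\lambda}Z^{(\sigma)}_{s+(n-m_\sigma)\lambda},
\end{align*}
where $m_\sigma:=S_\sigma/\lambda\in\N_0$. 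The first, finite sum is negligible as $n\to\infty$: hypothesis \eqref{eq:rt-cond} provides a random $C_\sigma<\infty$ with $|Y^{(\sigma)}_u|\leq C_\sigma e^{Du}h(u)$, and since $h$ is nonincreasing and integrable, $h(u)\to 0$, so each summand vanishes after multiplication by $e^{-Dn\lambda}$.

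Next I would compute the limit in mean. The many-to-one formula gives
\begin{align*}
\E Z_t \;=\; \sum_{n\geq 0}\E\sum_{|\sigma|=n}Y^{(\sigma)}_{t-S_\sigma} \;=\; \int_{[0,t]}\E Y_{t-x}\,dU(x),
\end{align*}
with $U=\sum_{n\geq 0}\mu^{*n}$ the renewal measure of $\mu=\E\xi$. The reweighted measure $\tilde\mu(dx):=e^{-Dx}\mu(dx)$ is a probability measure (by the defining equation of $D$) with mean $m(D)$, so $e^{-Du}U(du)$ is the renewal measure of $\tilde\mu$. Condition \eqref{eq:rt-cond} makes $g(u):=e^{-Du}\E Y_u$ directly Riemann integrable, and the classical lattice Blackwell theorem applied to $\tilde\mu$ yields
\begin{align*}
e^{-Dn\lambda}\E Z_{s+n\lambda} \;\xrightarrow[n\to\infty]{}\; L(s) \;:=\; \frac{\lambda}{m(D)}\sum_{j\geq 0}e^{-Dj\lambda}\E Y_{s+j\lambda}.
\end{align*}

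To promote convergence in mean to an almost-sure statement I would argue conditionally on $\hat{\mathcal{F}}_k$. In the lattice case, the subtree processes $Z^{(\sigma)}$ indexed by $\sigma\in\sT_k$ are conditionally i.i.d.\ copies of $Z$ and each argument $s+(n-m_\sigma)\lambda$ remains in $s+\lambda\Z_{\geq 0}$; applying the expectation computation just performed to each subtree gives $\E[e^{-Dn\lambda}Z_{s+n\lambda}\mid\hat{\mathcal{F}}_k]\to W_k L(s)$ a.s. A conditional variance estimate fed by \eqref{eq:rt-cond} shows that the fluctuations of $\sum_{\sigma\in\sT_k}e^{-DS_\sigma}\,e^{-D(n-m_\sigma)\lambda}Z^{(\sigma)}_{s+(n-m_\sigma)\lambda}$ around $W_k L(s)$ are summable in $n$, and a Borel--Cantelli argument along the lattice $n\in\N$ upgrades the convergence to almost sure for each fixed $k$. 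Letting $k\to\infty$ and using $W_k\to W_\infty$ a.s.\ identifies the limit as $W_\infty L(s)$, as claimed.

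The main obstacle is the concentration step combined with the iterated limit $n\to\infty$, then $k\to\infty$: one must control the error $\sum_{\sigma\in\sT_k}e^{-DS_\sigma}\,e^{-D(n-m_\sigma)\lambda}Z^{(\sigma)}_{s+(n-m_\sigma)\lambda}-W_k L(s)$ uniformly in $n$ in order to exchange these two limits. This uniformity is precisely what the nonrandom, integrable envelope $h$ in \eqref{eq:rt-cond} is designed to provide: the pathwise (rather than in-expectation) integrability encoded by the supremum is what allows Blackwell's theorem to be applied simultaneously to every realization of the subtree processes $Z^{(\sigma)}$.
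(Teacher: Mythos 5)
First, note that the paper does not prove this statement at all: it is quoted verbatim from Gatzouras (Theorem 3.4, lattice case), which in turn rests on Nerman's renewal theorem for branching random walks, so there is no in-paper proof to compare against. Your skeleton — split the tree at generation $k$, show the ancestral sum $\sum_{|\sigma|<k}$ is negligible via the envelope $h$, identify the limit in mean by rewriting $e^{-Dt}\E Z_t$ as a lattice renewal convolution for the tilted probability measure $e^{-Dx}\mu(dx)$, condition on $\hat{\mathcal F}_k$ to bring in $W_k$, and let $k\to\infty$ — is indeed the correct architecture and matches the known proof up to the last step. The computation of $L(s)$ and the vanishing of the first sum are fine.

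The genuine gap is the concentration step. You propose to control the fluctuations of $\sum_{\sigma\in\sT_k}e^{-DS_\sigma}e^{-D(n-m_\sigma)\lambda}Z^{(\sigma)}_{s+(n-m_\sigma)\lambda}$ around $W_kL(s)$ by a conditional variance estimate "fed by \eqref{eq:rt-cond}" plus Borel--Cantelli in $n$. This fails for two separate reasons. First, \eqref{eq:rt-cond} is a first-moment hypothesis; nothing in the theorem guarantees that $Z_{s+m\lambda}$, or even $W_m$, has a finite second moment (only $\E\nu<\infty$ is assumed — take $Y_t=\ind_{[0,\lambda)}(t)$, so that $Z_{n\lambda}$ is essentially the generation size $N_n$, whose variance is infinite when $\E\nu^2=\infty$). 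Second, even granting second moments, the conditional variance for \emph{fixed} $k$ equals $\sum_{\sigma\in\sT_k}e^{-2DS_\sigma}\Var\bigl(e^{-D(n-m_\sigma)\lambda}Z_{s+(n-m_\sigma)\lambda}\bigr)$, and the factors $\Var\bigl(e^{-Dm\lambda}Z_{s+m\lambda}\bigr)$ converge to a positive constant as $m\to\infty$ rather than to $0$; so the variance is not summable (indeed not even vanishing) in $n$, and Borel--Cantelli along $n$ for fixed $k$ gives nothing. The error only becomes small through the prefactor $\sum_{\sigma\in\sT_k}e^{-2DS_\sigma}\to 0$ as $k\to\infty$, which is exactly the uniformity-in-$n$ issue you flag as "the main obstacle" — but your proposed mechanism does not resolve it. Nerman's actual argument is a pure first-moment argument: one first proves the result for $Y$ dominated by an envelope of compact support (using stopping lines and the a.s.\ convergence of the associated family of Nerman martingales), and then passes to general $Y$ by a monotone $\limsup$/$\liminf$ sandwich in which the integrable envelope $h$ controls, uniformly in $t$ and in $L^1$, the tail contribution $\sum_{\sigma}Y^{(\sigma)}_{t-S_\sigma}\ind\{t-S_\sigma>c\}$. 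Without replacing your variance step by something of this kind, the proof does not go through under the stated hypotheses.
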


There is also a corresponding statement for the nonlattice case, which we omit here as we will not use it. The following discrete version of the previous theorem will be the main tool in the proof of our main result Theorem~\ref{thm:as-conv-Vk}. We only formulate it for the special case that the random measure $\xi$ is concentrated on a single value $\lambda>0$, which is the only case we need here. The statement can easily be generalized to any lattice random self-similar set (but the formulas are not as neat). Note that the assumption means that all contraction ratios are the same and equal $1/\Lambda=e^{-\lambda}$ almost surely. It implies in particular that $D= \log \E[\nu]/\log \Lambda$ and $m(D)=\lambda$.

\begin{prop} \label{prop:RTdiscrete}
Let $\{Y_n: n\in\N\}$ be a (discrete-time) stochastic process on $(\Omega,{\mathcal{F}},\P)$. 
Assume the random measure $\xi$ is almost surely concentrated on some $\lambda>0$, and set $\Lambda:=e^\lambda$.
Suppose there exists a non-increasing and summable sequence $(h_n)_{n\in\N}$, such that
\begin{align} \label{eq:RTdiscrete-cond}
  \E\left[\sup_{n\in\N} \frac{\Lambda^{-Dn}|Y_n|}{h_n}\right]<\infty.
\end{align}

Then almost surely, as $n\to\infty$,
\begin{align} \label{eq:RTdiscrete-concl}
  \Lambda^{-Dn} \sum_{\sigma\in\sT_*} Y^{(\sigma)}_{n-|\sigma|}\to {W_\infty} \sum_{n=0}^\infty \Lambda^{-Dn}\E[Y_{n}].
\end{align}
\end{prop}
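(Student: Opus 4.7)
The plan is to reduce the statement to the lattice Nerman--Gatzouras renewal theorem (Theorem~\ref{thm:nerman-rt}) by embedding the discrete process $(Y_n)_{n\in\N}$ into a continuous-time step-function process on $\R$. Concretely, I would set $\tilde{Y}_t := Y_n$ for $t\in((n-1)\lambda, n\lambda]$, $n\in\N$, and $\tilde{Y}_t := 0$ for $t\leq 0$. Then $\tilde{Y}$ is left-continuous (hence continuous off a countable set) and vanishes on $(-\infty, 0]$, as required by the hypotheses of Theorem~\ref{thm:nerman-rt}.

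Since by assumption $\xi$ is concentrated on $\lambda$, one has $S_\sigma = |\sigma|\lambda$ almost surely for every $\sigma\in\sT_*$. Evaluating the associated branching process $\tilde{Z}_t := \sum_\sigma \tilde{Y}^{(\sigma)}_{t-S_\sigma}$ at the lattice point $t = n\lambda$ therefore collapses to
$$
\tilde{Z}_{n\lambda} \;=\; \sum_{\sigma\in\sT_*,\,|\sigma|\leq n-1} Y^{(\sigma)}_{n-|\sigma|},
$$
which, under the convention $Y_0=0$, is precisely the quantity on the left-hand side of~\eqref{eq:RTdiscrete-concl}. To transfer the integrability hypothesis, I would choose $h:[0,\infty)\to(0,\infty)$ by $h(t) := C\, h_n$ for $t\in((n-1)\lambda, n\lambda]$, where $C:=\max(1,\Lambda^D)$. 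This $h$ is nonincreasing, positive, and satisfies $\int_0^\infty h(t)\,dt = \lambda C\sum_n h_n < \infty$. On each interval $((n-1)\lambda, n\lambda]$ the elementary bound $e^{-Dt}\leq C\,\Lambda^{-Dn}$ yields
$$
\sup_{t\geq 0}\frac{e^{-Dt}|\tilde{Y}_t|}{h(t)}\;\leq\;\sup_{n\in\N}\frac{\Lambda^{-Dn}|Y_n|}{h_n},
$$
so condition~\eqref{eq:rt-cond} for $\tilde{Y}$ follows directly from~\eqref{eq:RTdiscrete-cond} for $(Y_n)$.

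With these preparations, the proof concludes by applying Theorem~\ref{thm:nerman-rt} at $s=0\in[0,\lambda)$. Under the present hypothesis all contraction ratios equal $\Lambda^{-1}$, so $m(D) = \lambda\,\Lambda^{-D}\,\E[\nu] = \lambda$ by the defining equation $\E[\nu]\,\Lambda^{-D}=1$, and thus the prefactor $\lambda/m(D)$ equals $1$. Since $\tilde{Y}_{k\lambda} = Y_k$ for $k\geq 1$ and $\tilde{Y}_0=0$, the limit on the right-hand side in Theorem~\ref{thm:nerman-rt} reads $W_\infty\sum_{k\geq 1}\Lambda^{-Dk}\E[Y_k]$, which coincides with the right-hand side of~\eqref{eq:RTdiscrete-concl}. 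The main (minor) obstacle is to align three ingredients simultaneously---that $\tilde{Z}_{n\lambda}$ match the target discrete sum, that $\tilde{Y}$ be continuous almost everywhere, and that the dominating envelope $h$ inherit nonincrease and integrability from $(h_n)$---but the left-continuous step-function extension is tailored so that all three fall out at once.
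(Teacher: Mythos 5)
Your proof is correct and follows essentially the same route as the paper: embed $(Y_n)$ into a step-function process $\tilde Y$, transfer the summable envelope $(h_n)$ to an integrable nonincreasing $h$, and apply the lattice Nerman--Gatzouras theorem at $s=0$ with $m(D)=\lambda$. The only differences are cosmetic — the paper uses right-continuous steps on $[n\lambda,(n+1)\lambda)$, which makes $e^{-Dt}\le\Lambda^{-Dn}$ hold without your extra constant $C$ and makes $\tilde Y_{n\lambda}=Y_n$ including $n=0$, whereas your left-continuous choice shifts the index by one and relies on the (harmless, and in the application exact) convention $Y_0=0$.
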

\begin{proof}
  With the intention to apply Theorem~\ref{thm:nerman-rt} we consider the process $\tilde{Y}$ defined by $\tilde{Y}_t:=Y_n$, for $t\in I_n:=[n\log \Lambda, (n+1)\log \Lambda)$, $n\in\N_0$ and $\tilde{Y}_t:=0$, $t< 0$. Note that, for any $t\in I_n$,
  $$
  e^{-Dt} |\tilde{Y}_t|\leq e^{-Dn\log \Lambda} |Y_n|=\Lambda^{-Dn} |Y_n|.
  $$
Define the function $h:[0,\infty)\to(0,\infty)$ by 
$h(t):=h_n$ for $t\in I_n$, $n\in\N$. The assumed properties of the sequence $(h_n)$ imply that $h$ is non-increasing and integrable. Moreover, we have, for any $t\geq 0$,
\begin{align*}
  \frac{e^{-Dt}|\tilde{Y}_t|}{h(t)}\leq \frac{\Lambda^{-Dn}|{Y}_n|}{h_n}
\end{align*}
and thus (for any realization $\omega\in\Omega$)
\begin{align*}
  \sup_{t\geq 0}\frac{e^{-Dt}|\tilde{Y}_t|}{h(t)}=\sup_{n\in\N_0}\sup_{t\in I_n}\frac{e^{-Dt}|\tilde{Y}_t|}{h(t)}\leq \sup_{n\in\N_0}\frac{\Lambda^{-Dn}|{Y}_n|}{h_n}.
\end{align*}
Since, by assumption, the random variable on the right hand side has a finite expectation, so has the one on the left hand side.
Thus, the process $\tilde{Y}$ satisfies condition \eqref{eq:rt-cond} in Theorem~\ref{thm:nerman-rt} and we can apply this theorem to $\tilde{Y}$. (Note also that $\tilde Y$ vanishes on $(-\infty,0)$ and is continuous almost everywhere with probability 1.) Recalling that $\lambda=\log \Lambda$, we conclude in particular that (for $s=0$) the rescaled sum
$$
e^{-Dn\lambda} \sum_{\sigma\in\sT_*} \tilde{Y}^{(\sigma)}_{n\lambda-S_\sigma}=\Lambda^{-Dn} \sum_{\sigma\in\sT_*} \tilde{Y}^{(\sigma)}_{(n-|\sigma|)\log \Lambda}=\Lambda^{-Dn} \sum_{\sigma\in\sT_*} Y^{(\sigma)}_{n-|\sigma|}
$$
converges almost surely, as $n\to \infty$, to the random variable
\begin{align*}
\frac{\lambda W_\infty}{m(D)} \sum_{n=0}^\infty e^{-Dn\lambda}\E[\tilde{Y}_{n\lambda}]= W_\infty \sum_{n=0}^\infty \Lambda^{-Dn}\E[Y_{n}]. 
\end{align*}
\end{proof}

\section{Proof of Theorem~\ref{thm:as-conv-Vk}} \label{sec:proof}

We introduce some further notation. For $\tau\in \sT_*$, recall from \eqref{eq:F-tau} the definition of the random set $F^{(\tau)}$ and that it satisfies $F^{(\tau)}\deq F$. Furthermore, we will write $F^{[\tau]}:=\overline{\Phi}_\tau(F^{(\tau)})$ for the corresponding scaled copy of $F^{(\tau)}$ in $F$. 
We use $F^{(\tau)}_n$ for the $n$-th approximation of $F^{(\tau)}$, in particular $F^{(\tau)}_0=J$, and similarly, we let, for any $\tau\in\sT_*$ and $n\geq |\tau|$, $F^{[\tau]}_n:=\overline{\Phi}_\tau(F^{(\tau)}_{n-|\tau|})$. To see the relation with the notation $F^j_n$ introduced in \eqref{eq:Fj_ndef}, let  $F^\tau_n:=\bigcup_{\sigma\in\sT_n, \sigma||\tau|=\tau} J_\sigma$ for $n\geq|\tau|$. Then $F^{\tau}_n=F^{[\tau]}_n\cap \tilde J_\tau$, where $\tilde J_\tau$ is the random set which equals the cube $J_\tau$ provided $\tau\in\sT_*$ and is empty otherwise. Note that in particular
\begin{align}
  \label{eq:two-ways} F_n=\bigcup_{j=1}^{M^d} F_n^j=\bigcup_{j\in\sT_1} F_n^{[j]}.
\end{align}
Now we are ready to provide a proof of Theorem~\ref{thm:as-conv-Vk}.
\begin{proof}[Proof of Theorem~\ref{thm:as-conv-Vk}.] Fix $k\in\{0,\ldots,d\}$.
In order to express the functionals $Z^k_n$ in the form of the left hand side of \eqref{eq:RTdiscrete-concl}, we set $\widehat{Z}_n:=M^{nk} V_k(F_n)$. We will also write $\widehat{Z}^{(\sigma)}_n:=M^{nk} V_k(F^{(\sigma)}_n)$ for the corresponding functionals of the shifted random set $F^{(\sigma)}$, $\sigma\in\sT_*$. (To ease the notation, we suppress the dependence on $k$ here.) Observe that, by the inclusion-exclusion principle, we can decompose the random variables $\widehat{Z}_n$ for any $n\in\N$ as follows
\begin{align*}
  \widehat{Z}_n&=M^{nk} V_k(F_n)=M^{nk} V_k(\bigcup_{j\in\sT_1} {F}_n^{[j]})\\
 &= M^{nk}\sum_{j\in\sT_1} V_k(F_n^{[j]})+M^{nk}\sum_{T\subset{\sT}_1, |T|\geq 2} (-1)^{|T|-1} V_k(\bigcap_{j\in T} F_n^{[j]}).
\end{align*}
For any $n\in\N$, we define $Y_n$ to be the second of the two summands above and we set $Y_0:=0$ (which is consistent). Using the relation $V_k(F_n^{[j]})=M^{-k}V_k(F^{(j)}_{n-1})$ (recall that $F^{[j]}_n=\phi_j(F^{(j)}_{n-1})$, where $\phi_j$ has contraction ratio $1/M$ and that $V_k$ is homogeneous of order $k$), we infer that
\begin{align*}
   \widehat{Z}_n&=\sum_{j\in\sT_1} M^{(n-1)k} V_k(F^{(j)}_{n-1})+ Y_n=\sum_{j\in\sT_1} \widehat{Z}^{(j)}_{n-1}+ Y_n.
\end{align*}
Now each of the variables $\widehat{Z}^{(j)}_{n-1}$, $j\in\sT_1$, can be decomposed in the very same manner, which yields $\widehat{Z}^{(j)}_{n-1}=\sum_{\sigma\in\sT_2, \sigma|1=j} \widehat{Z}^{(\sigma)}_{n-2} + Y_{n-1}^{(j)}$ for $n\in\N_{\geq 2}$. For $n=1$ we simply get $\widehat{Z}^{(j)}_{n-1}=V_k(F^{(j)}_0)=V_k(J)=q_{d,k}$ for any $j\in\sT_1$. Iterating this procedure, we end up with one term $Y^{(\sigma)}_\ell$ for each finite word $\sigma\in\sT_*$ of length $|\sigma|$ up to $n-1$, i.e.\
we obtain
\begin{align} \label{eq:hatZ_n}
   \widehat{Z}_n&=\sum_{\sigma\in\sT_n} \widehat{Z}^{(\sigma)}_0+\sum_{\sigma\in\sT_*, |\sigma|<n} Y^{(\sigma)}_{n-|\sigma|}.
\end{align}
Here the first summand simplifies to $q_{d,k}|\sT_n|=q_{d,k} N_n$, since, for each $\sigma\in\sT_n$, $F^{(\sigma)}_0$ equals the unit cube $J$ and so $\widehat{Z}^{(\sigma)}_0= V_k(J)=q_{d,k}$.

We wish to study the limit of $Z^k_n=M^{-Dn} \widehat{Z}_n$, as $n\to\infty$, which we can do by studying separately the limits of the two sequences given by the two summands in \eqref{eq:hatZ_n}. The limiting behaviour of the first resulting sequence $(M^{-Dn} q_{d,k}N_n)$ is obvious.
Since $M^{Dn}=(M^{d}p)^{n}=\E N_n$, we conclude that
\begin{align}
  \label{eq:first-part} M^{-Dn} q_{d,k}N_n=q_{d,k} W_n \to q_{d,k} W_\infty \quad \text{ a.s., as } n\to\infty.
\end{align}
Thus the first summand shows the desired factorization with the first factor being deterministic and the second one being $W_\infty$.
The second sequence is of a form which allows to employ Proposition~\ref{prop:RTdiscrete}. In order to do so we need to verify that our  process $(Y_n)_n$ satisfies the assumptions of this statement. More precisely, we need to verify that
there is a non-increasing and summable sequence $(h_n)_{n\in\N}$, such that
\begin{align} \label{eq:RTdiscrete-cond2}
  \E\left[\sup_{n\in\N} \frac{M^{-Dn}|Y_n|}{h_n}\right]<\infty.
\end{align}
This can indeed be derived from the following Proposition~\ref{prop:main-estimate}, whose proof is postponed to the end of the section.
\begin{prop}
  \label{prop:main-estimate}
  Let $F$ be a fractal percolation in $[0,1]^d$ with parameters $M\in\N_{\geq 2}$ and $p\in(0,1]$. Then there exists a non-increasing and summable sequence $(h_n)$ such that, for any subset $T\subset\{1,2,\ldots, M^d\}$ with $|T|\geq 2$,
  \begin{align} \label{eq:RTdiscrete-cond3}
  \E\left[\sup_{n\in\N} \frac{M^{(k-D)n}}{h_n}|V_k(\bigcap_{j\in T} F^{[j]}_n)|\right]<\infty.
\end{align}
\end{prop}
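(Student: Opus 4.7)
My plan is to reduce the bound on $V_k$ of $\bigcap_{j\in T}F^{[j]}_n$ to a counting estimate for basic faces on a low-dimensional face and then choose $h_n$ to absorb the resulting geometric decay.

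\emph{Step 1 (dimension reduction).} For any $T\subset\{1,\ldots,M^d\}$ with $|T|\geq 2$, the set $G_T:=\bigcap_{j\in T}J_j$ is either empty (in which case $\bigcap_{j\in T}F^{[j]}_n=\emptyset$ and the estimate is trivial) or a common face of $J$ of some dimension $\ell(T)\leq d-1$. Since $\bigcap_{j\in T}F^{[j]}_n\subset G_T$, this intersection lies in an at most $(d-1)$-dimensional affine subspace and $V_k$ vanishes whenever $k>\ell(T)$. From now on assume $k\leq\ell(T)$, and let $N_n^T$ denote the (random) number of level-$n$ basic $\ell(T)$-faces of $G_T$ contained in $\bigcap_{j\in T}F^{[j]}_n$. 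A standard polyconvex estimate, already implicit in the computations of~\cite{KW18}, gives
\begin{align*}
  |V_k(\bigcap_{j\in T}F^{[j]}_n)|\leq c_{d,k}\,N_n^T\,M^{-nk},
\end{align*}
with a constant $c_{d,k}$ independent of $T$ and $n$, because the union of $N_n^T$ axis-aligned $\ell(T)$-cubes of sidelength $M^{-n}$ has at most $c_{d,k}\,N_n^T$ $k$-faces, each contributing $M^{-nk}$ to $V_k$ up to a universal combinatorial weight.

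\emph{Step 2 (expectation of $N_n^T$).} A fixed level-$n$ sub-face $f$ of $G_T$ lies in $\bigcap_{j\in T}F^{[j]}_n$ if and only if, for each $j\in T$, the chain of $n$ ancestor cubes inside $J_j$ leading down to the basic cube adjacent to $f$ is retained. Since the percolations $F^{[j]}$ for $j\in T$ are independent and each such chain survives with probability $p^n$, linearity of expectation yields $\E N_n^T=M^{n\ell(T)}\,p^{n|T|}$. Combining this with Step~1 and using $M^D=M^d p$ gives
\begin{align*}
  \E\!\left[M^{(k-D)n}|V_k(\bigcap_{j\in T}F^{[j]}_n)|\right]\leq c_{d,k}M^{-Dn}\E N_n^T = c_{d,k}\bigl(M^{\ell(T)-d}p^{|T|-1}\bigr)^n\leq c_{d,k}(p/M)^n,
\end{align*}
where the final inequality uses $\ell(T)\leq d-1$ and $|T|\geq 2$; note $p/M<1$.

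\emph{Step 3 (choice of $h_n$).} Fix any $r\in(p/M,1)$ and set $h_n:=r^n$. Then $(h_n)$ is non-increasing and summable, and the bound in Step~2 is uniform in $T$, so the same $h_n$ serves all admissible $T$ (finitely many in any case). Bounding the supremum of nonnegative random variables by their sum and applying Tonelli,
\begin{align*}
  \E\Bigl[\sup_{n\in\N}\frac{M^{(k-D)n}|V_k(\bigcap_{j\in T}F^{[j]}_n)|}{h_n}\Bigr]\leq c_{d,k}\sum_{n\geq 1}\bigl(p/(Mr)\bigr)^n<\infty,
\end{align*}
which is the desired estimate. The delicate point is the polyconvex bound in Step~1: because $V_k$ is signed on polyconvex sets, $|V_k|$ of a union is not trivially dominated by the sum of $|V_k|$ on the pieces. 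I would resolve this by expressing $V_k$ via the cubical complex structure and controlling the number of $k$-faces of the union by $c_{d,k}N_n^T$, which parallels the bookkeeping used in~\cite{KW18} to compute $\E V_k(F_n)$.
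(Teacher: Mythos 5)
Your argument is correct and follows essentially the same route as the paper's proof: reduce to the at most $(d-1)$-dimensional cube $\bigcap_{j\in T}J_j$, invoke the variation bound $|V_k(\cdot)|\leq c\,N\,M^{-kn}$ of \cite[Lemma 7.2]{KW18} for unions of $N$ level-$n$ cubes, and dominate the supremum by the sum of the rescaled expectations, which decay geometrically. The only (harmless) differences are bookkeeping: you take the expectation of the intersection count $N_n^T$ directly (note that $F_n^{[j]}$ involves only $n-1$ retention steps, so the exact value is $M^{(n-1)\ell(T)}p^{(n-1)|T|}$ rather than $M^{n\ell(T)}p^{n|T|}$ --- a constant-factor slip that does not affect the geometric rate), whereas the paper passes to the normalized martingale $W^u_n$ of a single lower-dimensional Galton--Watson process, and you choose $h_n=r^n$ with $r\in(p/M,1)$ where the paper takes $h_n=M^{-n/2}$.
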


Let us now show that this statement implies condition \eqref{eq:RTdiscrete-cond2}. First observe that, since $\sT_1$ is a random subset of $\Sigma=\{1,\ldots,M^d\}$, we have for any $n\in\N$
\begin{align*}
  |Y_n|&=M^{nk}\left|\sum_{T\subset{\sT}_1, |T|\geq 2} (-1)^{|T|-1} V_k(\bigcap_{j\in T} F_n^{[j]})\right|\\
  &\leq M^{nk}\sum_{T\subset{\sT}_1, |T|\geq 2}  \Big|V_k(\bigcap_{j\in T} F_n^{[j]})\Big|
   \leq\sum_{T\subset \Sigma, |T|\geq 2} M^{nk} \Big|V_k(\bigcap_{j\in T} F_n^{[j]})\Big|.
\end{align*}
Therefore, Proposition~\ref{prop:main-estimate} implies that there is some non-increasing and summable sequence $(h_n)$ such that
   \begin{align*}
  \E\left[\sup_{n\in\N} \frac{M^{-Dn}}{h_n}|Y_n|\right]
  &\leq \E\left[\sup_{n\in\N} \frac{M^{-Dn}}{h_n} \sum_{T\subset \Sigma, |T|\geq 2} M^{nk} \Big|V_k(\bigcap_{j\in T} F_n^{[j]})\Big|\right]\\
  &\leq \E\left[\sum_{T\subset \Sigma, |T|\geq 2} \sup_{n\in\N} \frac{M^{(k-D)n}}{h_n} \Big|V_k(\bigcap_{j\in T} F_n^{[j]})\Big|\right]\\
  &\leq \sum_{T\subset \Sigma, |T|\geq 2} \E\left[\sup_{n\in\N} \frac{M^{(k-D)n}}{h_n} \Big|V_k(\bigcap_{j\in T} F_n^{[j]})\Big|\right]<\infty,
\end{align*}
verifying condition \eqref{eq:RTdiscrete-cond2}. Hence we can apply Proposition~\ref{prop:RTdiscrete} to the sequence of the second summands in \eqref{eq:hatZ_n}. We infer that, as $n\to\infty$ ,
\begin{align}
  M^{-Dn} \sum_{\sigma\in\sT_*} Y^{(\sigma)}_{n-|\sigma|}\to {W_\infty} \cdot \sum_{n=1}^\infty M^{-Dn}\E[Y_{n}],
\end{align}
where, for any $n\in\N$,
\begin{align*}
  \E[Y_{n}]&=M^{kn}\E\Big[\sum_{T\subset{\sT}_1, |T|\geq 2} (-1)^{|T|-1} V_k(\bigcap_{j\in T} F_n^{[j]})\Big]\\
  &=M^{kn}\E\Big[\sum_{T\subset\Sigma, |T|\geq 2} (-1)^{|T|-1} V_k(\bigcap_{j\in T} F_n^{j})\Big].
\end{align*}
Here the last equality is due to \eqref{eq:two-ways}. Using this last representation, we get
\begin{align*}
  \sum_{n=1}^\infty M^{-Dn}\E[Y_{n}]
  &=\sum_{n=1}^\infty M^{(k-D)n}\sum_{T\subset\Sigma, |T|\geq 2} (-1)^{|T|-1} \E\Big[V_k(\bigcap_{j\in T} F_n^{j})\Big]\\
  &= \sum_{T\subset\Sigma, |T|\geq 2} (-1)^{|T|-1} \sum_{n=1}^\infty M^{(k-D)n} \E\Big[V_k(\bigcap_{j\in T} F_n^{j})\Big],
\end{align*}
where the interchange of the summations in the last equality is justified as long as all the (finitely many) series in the last expression converge. But this convergence has been shown in \cite[Proposition 5.1]{KW18} for any $p\in(0,1]$.
Recall that this is an expression for the limit of the second summands in \eqref{eq:hatZ_n}. Combining it with the limit of the first summands in \eqref{eq:first-part}, we see by comparison with \eqref{eq:Vk-limit-general}, that
\begin{align*}
 Z^k_\infty= \lim_{n\to\infty} Z_n^k= \osV_k(F)\cdot W_\infty \text{ a.s.},
\end{align*}
as asserted in Theorem~\ref{thm:as-conv-Vk}. This completes the proof.
\end{proof}

\begin{proof}[Proof of Proposition~\ref{prop:main-estimate}]
First observe that, for any index set $T\subset\Sigma$, the set $C:=\bigcap_{j\in T} J_j$  is a cube with sidelength $1/M$ and some dimension $u\leq d-1$. If $u=0$ the estimate in \eqref{eq:RTdiscrete-cond3} is obviously satisfied. So assume $u\geq 1$. For any $n\in\N$, the set $\bigcap_{j\in T} F_n^{[j]}$ is a subset of $C$ and thus at most $u$-dimensional. Note that in each of the sets $F_n^{[j]}$ only those level-$n$ cubes are relevant for the intersection $\bigcap_{j\in T} F_n^{[j]}$ which intersect $C$. This means we can model the independent random sets $F^{[j]}$,$j\in T$ as well by independent $u$-dimensional fractal percolations $K^{\{j\}}$, $j\in T$, constructed on the cube $C$ (with the same parameters $p$ and $M$ as $F$). That is, we have $F^{[j]}\cap C\deq K^{\{j\}}$ for each $j\in T$ and therefore, $F_n^{[j]}\cap C\deq K_n^{\{j\}}$ for each $n\in\N$. In particular, this implies
$\bigcap_{j\in T} F_n^{[j]}\deq \bigcap_{j\in T} K_n^{\{j\}}$. Choose some index $j_1$ from $T$ and denote by $N^u_n$ the number of level $n$ cubes contained in $K_n^{\{j_1\}}$. Recall that the sequence $(N^u_n)$ forms a Galton-Watson process with binomial offspring distribution $\text{Bin}(M^u, p)$.  Now we can apply \cite[Lemma 7.2]{KW18} according to which there is some constant $c_{u,k}$ (independent of $n$) such that
\begin{align*}
  |V_k(\bigcap_{j\in T} F_n^{[j]})|\deq
  |V_k(\bigcap_{j\in T} K_n^{\{j\}})|\leq C_k^{\var}(\bigcap_{j\in T}  K_n^{\{j\}}) \leq c_{u,k} M^{-kn} N^u_n. 
\end{align*}
Note that we deal here with unions of level $n$ subcubes of $C$ and not of the unit cube. However, this does only change the constant of the lemma by a factor $M^{-k}$, since we can scale the whole situation by factor $M$ to take place in a unit size cube. 
Multiplying by $M^{(k-D)n}$ and setting $W^{u}_n:=N^{u}_n/\E N^{u}_n$, where $\E N^{u}_n=(\E N^{u}_1)^n= M^{un}p^n$, we get
\begin{align*}
 M^{(k-D)n} |V_k(\bigcap_{j\in T} F_n^{[j]})|\leq c_{u,k} M^{(u-D)n}p^n W^{u}_n= c_{u,k} M^{(u-d)n} W^{u}_n. 
\end{align*}
Choosing now e.g.\ $h_n:=M^{-n/2}$, $n\in\N$ (which obviously forms a non-increasing and summable sequence), we conclude from the above inequality
that 
\begin{align}\label{eq:final}
\E\left[\sup_{n\in\N} \frac{M^{(k-D)n}}{h_n} |V_k(\bigcap_{j\in T} F_n^{[j]})|\right]\leq c_{u,k} \E\left[\sup_{n\in\N} M^{-\alpha n} W^{u}_n\right], 
\end{align}
where $\alpha:=d-u-\frac 12$. Note that $\alpha\geq\frac 12$, since $u\leq d-1$.
Now observe that the sequence of random variables  $X_m:=\sup_{n\in\{1,\ldots,m\}} M^{-\alpha n} W^{u}_n$, $m\in\N$, is non-decreasing and converges as $m\to\infty$ to $\sup_{n\in\N} M^{-\alpha n} W^{u}_n$. Moreover, we have for any $m\in\N$,
\begin{align*}
  \E\left[X_m\right]&\leq 
   \E\left[\sum_{\ell=1}^n M^{-\alpha \ell} W^{u}_\ell \right]=\sum_{\ell=1}^n M^{-\alpha \ell} \E\left[W^{u}_\ell\right]=\frac{1-M^{-\alpha n}}{M^{\alpha}-1}\leq \frac{1}{M^{\alpha}-1},
\end{align*}
which, by monotone convergence, implies that the right hand side of \eqref{eq:final} is bounded (by $c_{u,k}$ times the latter constant, in which $\alpha\geq 1/2$).
Finally, note that the chosen sequence $(h_n)$ is independent of the set $T$, and hence it can be used for all sets $T\subseteq\Sigma$, $|T|\geq 2$. Moreover, there are only finitely many choices for the constant $c_{u,k}$ (which depends on $T$ via the dimension $u$ of the resulting cube $C$). This shows the finiteness of the expectation in \eqref{eq:RTdiscrete-cond3} and completes the proof.
\end{proof}





\section[Variance of the volume of Fn]{Variance of the volume of $F_n$} \label{sec:var_vol}

Theorem~\ref{thm:var-vol} provides exact expressions for expectation and variance of the volume of $F_n$. We discuss two proofs of this result. A standard argument provides a short and elegant proof of the particular case of the volume, which does not generalize to other intrinsic volumes. The second one uses a recursion argument and avoids branching process techniques. It is ultimately based on the inclusion-exclusion principle and provides the idea how to prove analogous results for the other intrinsic volumes.

\begin{proof}[First proof of Theorem~\ref{thm:var-vol}.]
Observe that the volume $V_d(F_n)$ equals the number $N_n$ of cubes in the union $F_n$ times the volume of a single cube of level $n$, i.e.\ $V_d(F_n)=M^{-dn} N_n$, $n\in\N_0$. Recalling expectation and variance of $N_n$ from \eqref{eq:variance-N-n}, this implies
$\E(V_d(F_n))= M^{-dn} \E(N_n) = p^n$ and
\begin{align*}
    \Var(V_d(F_n))&=M^{-2dn} \Var(N_n)=\begin{cases}
     \frac{1-p}{M^dp-1} (M^{-d}p)^n\left((M^dp)^n-1\right),& \text{ if } p\neq 1/M^d,\\
     M^{-2dn}\cdot n\cdot (1-p), & \text { if } p=1/M^d,
  \end{cases}
\end{align*}
 from which formula \eqref{eq:var-vol} follows at once.
%
%
 \end{proof}

For the other intrinsic volumes $V_k(F_n)$, $k=0,\ldots,d-1$ a similar argument will not work since the mutual intersections of the cubes are essential for determining the functionals. Therefore we want to discuss a different proof of Theorem~\ref{thm:var-vol}, which provides an idea of how to proceed in the general case. It is based on simple recursions. We start by introducing some additional notation and some useful observation needed for the proof.

 Recall the definition of the code space $\Sigma^*$ and the encoding of the basic cubes $J_\sigma$ from \eqref{eq:J_sigma-def2}. Observe that $J_{\sigma \omega}\subset J_\sigma$ for all $\sigma,\omega\in\Sigma^*$. It will be convenient to consider all basic cubes and not only those whose ancestors survived previous steps of the construction. The construction steps $F_n$ can also be characterized as follows.

 For $n\in\N$ and $\sigma\in\Sigma_n$, let $Y_\sigma$ be the 0-1 random variable, which models the decision whether the subcube  $J_\sigma$ is kept in the $n$-th step of the construction of $F$. Observe that $\{Y_\sigma:\sigma\in\Sigma^*\}$ is a family of i.i.d.\ Bernoulli variables with parameter $p$. For $n\in\N$, we define the \emph{$n$-th layer} $L_n$ to be the random set given by
 \begin{align*}
   L_n:=\bigcup_{\sigma\in\Sigma^n, Y_\sigma=1} J_\sigma,
 \end{align*}
 and the \emph{$n$-th construction step} $F_n$ is then given by
 \begin{align*}
   F_n=\bigcap_{m=1}^n L_m.
 \end{align*}

For convenience set $F_0:=L_0:=J$, which is a deterministic set. The sets 
$F^j_{n}$ (defined in \eqref{eq:Fj_ndef}) can also be characterized by
$F^j_n=\bigcap_{m=1}^n L_m^j$, where
$$
L_m^j:=\bigcup_{\sigma\in\Sigma^m, \sigma|1=j, Y_\sigma=1} J_\sigma.
$$
It will be convenient to have also a notation for the resulting set in the $n$-th step if we ignore the first step of the construction. To this end, we define for any pair of indices $\ell,n\in\N$, with $\ell\leq n$ the sets
 \begin{align*}
   F_{\ell,n}:=\bigcap_{m=\ell}^n L_m \qquad \text{ and } \qquad  F_{\ell,n}^j:=\bigcap_{m=\ell}^n L_m^j, \quad j=1,\ldots, M^d.
 \end{align*}
 Note that $F_{1,n}=F_n$ and $F_{n,n}=L_n$ (and, similiarly, $F_{1,n}^j=F_n^j$ and $F_{n,n}^1=L_n^j$). The most important observation is that $F_{2,n}^j$ is a scaled version of $F_{n-1}$, that is, we have, for any $j\in\Sigma$,
 \begin{align} \label{eq:scaling-of-F2nj}
   F_{2,n}^j\deq\varphi_j(F_{n-1}).
 \end{align}

  When we study the intrinsic volumes of the $F_n$, it will convenient to separate the effect of the first step from the effect of the later steps.
  \begin{lem} \label{lem:self-sim-j}
     For any $n\in\N$, $k\in\{0,\ldots,d\}$ and $j\in\Sigma$,
   \begin{align*}
     V_k(F_n^j)\deq M^{-k} V_k(F_{n-1}) \cdot Y,
   \end{align*}
   where $Y$ is a Bernoulli variable with parameter $p$ independent of $F_{n-1}$. In particular, this implies that 
   \begin{align}
     \label{eq:self-sim-j-Exp}\E V_k(F_n^j)& =\frac p {M^{k}} \E V_k(F_{n-1}) \text{ and } \\
     \label{eq:self-sim-j-Var}\Var (V_k(F_n^j))&=\frac{p}{M^{2k}}\left(\Var( V_k(F_{n-1}))+ (1-p)(\E V_k(F_{n-1}))^2\right). 
   \end{align}
  \end{lem}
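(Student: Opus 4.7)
The plan is to factor out the effect of the first-step retention $Y_j$ and reduce the rest to a scaled copy of $F_{n-1}$. The key observation is that $F_n^j = L_1^j \cap F_{2,n}^j$ by definition, where $L_1^j = J_j$ when $Y_j=1$ and $L_1^j = \emptyset$ otherwise, while $F_{2,n}^j \subset J_j$ for every realization. Hence $F_n^j = F_{2,n}^j$ on the event $\{Y_j = 1\}$ and $F_n^j = \emptyset$ on $\{Y_j = 0\}$. Since $V_k$ vanishes on the empty set, this yields the pointwise identity $V_k(F_n^j) = Y_j \cdot V_k(F_{2,n}^j)$ on $\Omega$.

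Next I would invoke the scaling relation \eqref{eq:scaling-of-F2nj}, which gives $F_{2,n}^j \deq \varphi_j(F_{n-1})$, combined with the $k$-homogeneity of $V_k$ under similarities of ratio $1/M$, to obtain $V_k(F_{2,n}^j) \deq M^{-k} V_k(F_{n-1})$. The remaining step is to verify that $Y_j$ is independent of $F_{2,n}^j$: the former depends only on the first-level retention Bernoulli $Y_j$, while the latter is determined by the retention variables $\{Y_\sigma : \sigma\in\Sigma^m,\ \sigma|1=j,\ m\geq 2\}$ at levels $\geq 2$ inside $J_j$. These two collections of Bernoulli variables are independent by the i.i.d.\ product construction of $(\Omega,\mathcal{A},\P)$ recalled in Section~\ref{sec:2}. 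Combining the three observations yields the distributional identity $V_k(F_n^j) \deq M^{-k} Y \cdot V_k(F_{n-1})$ with $Y$ Bernoulli with parameter $p$ and independent of $F_{n-1}$, which is the main claim of the lemma.

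The moment formulas \eqref{eq:self-sim-j-Exp} and \eqref{eq:self-sim-j-Var} then follow from the standard identities for a product of two independent variables. Writing $Z := M^{-k} V_k(F_{n-1})$ and using $\E Y = \E Y^2 = p$, one has $\E[YZ] = p\,\E Z$ and
$$\Var(YZ) = p\,\E[Z^2] - p^2 (\E Z)^2 = p\,\Var(Z) + p(1-p)(\E Z)^2,$$
which rearranges into \eqref{eq:self-sim-j-Var} after pulling the factor $M^{-2k}$ out of $\Var(Z)$ and $(\E Z)^2$.

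I do not foresee any serious obstacle. The only point requiring a bit of care is the independence of $Y_j$ and $F_{2,n}^j$, which is built into the independent product construction of the probability space; one just needs to be explicit about which coordinate random variables each side depends on. Everything else is routine homogeneity and algebra.
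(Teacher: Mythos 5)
Your proposal is correct and follows essentially the same route as the paper's proof: the identity $V_k(F_n^j)=Y_j\cdot V_k(F_{2,n}^j)$, the scaling relation \eqref{eq:scaling-of-F2nj} combined with $k$-homogeneity of $V_k$, the independence of $Y_j$ from $F_{2,n}^j$ via the product construction, and the standard moment identities for a product of independent variables. No gaps.
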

   \begin{proof}
      On the one hand, we have for any $n\in\N$, by \eqref{eq:scaling-of-F2nj} and the invariance properties of the intrinsic volumes,
  \begin{align} \label{eq:1-lem2-1}
    V_k(F_{2,n}^j)\deq V_k(\varphi_j(F_{n-1}))=M^{-k} V_k(F_{n-1}).
  \end{align}
 On the other hand,
  \begin{align}
    V_k(F_{n}^j)=V_k(L_1^j\cap F_{2,n}^j)=V_k(F_{2,n}^j)\cdot Y_j,
  \end{align}
  where $Y_j$ is the Bernoulli variable which determines whether $J_j$ is kept in the first step.
  Observe that $Y_j$ is independent of $F_{2,n}^j$. Combining both equations proves the first assertion. The expressions for expectation and variance follow from the first equation taking into account the independence of $Y$ and $F_{2,n}^j$ and recalling that $\E Y=\E Y^2=p$ and $\Var(Y)=p(1-p)$.
   \end{proof}

   Now we are ready to provide an alternative proof of Theorem~\ref{thm:var-vol} based on a direct recursion as announced.

  \begin{proof}[Second proof of 
  Theorem~\ref{thm:var-vol}]
  Recall that $F_n=\bigcup_{j=1}^{M^d} F_n^j$ and that from the perspective of volume this union is disjoint. Hence $V_d(F_n)=\sum_j V_d(F_n^j)$. Observe that this decomposes $V_d(F_n)$ into a sum of independent random variables.  According to Lemma~\ref{lem:self-sim-j}, the sets $F_n^j$ satisfy $\E V_d(F_n^j)= \frac p{M^{d}} \E V_d(F_{n-1})$, which implies that
  $$\E V_d(F_n)=\sum_j p M^{-d} \E V_d(F_{n-1})= p \cdot\E V_d(F_{n-1}) $$
  for any $n\in\N$, where $\E V_d(F_0)=1$. This is a recursion relation which implies immediately that $\E V_d(F_n)=p^n$. Furthermore, Lemma~\ref{lem:self-sim-j} yields also a recursion equation for the variance of $V_d(F_n)$
  \begin{align*}
    \Var(V_d(F_n))&=\Var\left(\sum_{j=1}^{M^d} V_d(F_n^j)\right)=\sum_{j=1}^{M^d}\Var\left(V_d(F_n^j)\right)\\
    &=\sum_{j=1}^{M^d} \frac{p}{M^{2d}}\left(\Var( V_d(F_{n-1}))+ (1-p)(\E V_d(F_{n-1}))^2\right)\\
    &= \frac{p}{M^{d}}\cdot \Var( V_d(F_{n-1})) + \frac{1-p}{M^{d}p}\cdot p^{2n},
  \end{align*}
  where we have used the independence of the $V_d(F_n^j)$ for the second equality. By induction, this leads to
  \begin{align*}
      \Var(V_d(F_n))&=\frac{p}{M^{d}} \Var(V_d(F_0))+\frac{1-p}{M^{d}p}\cdot \sum_{\ell=0}^{n-1} (\frac{p}{M^{d}})^{\ell} p^{2(n-\ell)},
  \end{align*}
  where the first summand vanishes, since $F_0=[0,1]^2$ is deterministic, and the second summand simplifies to the expression stated in Theorem~\ref{thm:var-vol}.
\end{proof}

\begin{rem}
  For the variances of the other intrinsic volumes similar (but more involved) recursions can be formulated. The decomposition $F_n=\bigcup_j F_n^j$ is still useful but, due to the inclusion-exclusion formula, it will lead to additional terms taking care of the intersection structure. Moreover, the functionals $V_k(F_n^j)$ are not independent anymore and their covariances have to be taken into account. In the last section we work this out for the surface area $V_{d-1}(F_n)$.
\end{rem}

The following statement is a generalization of Theorem~\ref{thm:var-vol} to the volume of the intersection of several fractal percolations generated independently on the same basic cube $J$. It will be used in the computations for the surface area, where some of the intersection terms can be interpreted as lower dimensional volume.
\begin{cor} \label{cor:l-indep-fps}
  Let $\ell\in\N$ and let $F^{\{1\}},\ldots, F^{\{\ell\}}$ 
be independent fractal percolations on $[0,1]^d$ with the same parameters $M\in\N_{\geq 2}$ and $p\in(0,1]$. 
  Then, for each $n\in\N$,
  $$
  V_d(F_n^{\{1\}}\cap\ldots\cap F_n^{\{\ell\}})\deq V_d(F'_n),
  $$
  where $F'$ is a fractal percolation on $[0,1]^d$ with parameters $M$ and $p^\ell$. In particular,
  \begin{align*} 
     \E V_d(F_n^{\{1\}}\cap\ldots\cap F_n^{\{\ell\}})&=p^{\ell n} \qquad \text{ and }\\
     \Var(V_d(F_n^{\{1\}}\cap\ldots\cap F_n^{\{\ell\}}))&=\begin{cases}
        \frac{1-p^\ell}{M^dp^\ell-1} \left(p^{2\ell n}-\left(\frac{p^\ell}{M^d}\right)^n\right),& p^\ell\neq M^{-d},\\
        (1-p^\ell)\cdot n\cdot p^{2\ell n},& p^\ell= M^{-d}.
     \end{cases}
  \end{align*}
\end{cor}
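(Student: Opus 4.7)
The plan is to couple the $\ell$ independent fractal percolations into a single fractal percolation with retention probability $p^\ell$, at the level of the underlying Bernoulli variables indexed by the code space $\Sigma^*$. More precisely, for each $i\in\{1,\ldots,\ell\}$ and each $\sigma\in\Sigma^*$, let $Y_\sigma^{\{i\}}$ denote the Bernoulli$(p)$ retention variable for the basic cube $J_\sigma$ in the construction of $F^{\{i\}}$, in the sense of Section~\ref{sec:var_vol}. Define
\[
\tilde Y_\sigma := \prod_{i=1}^\ell Y_\sigma^{\{i\}}, \qquad \sigma\in\Sigma^*.
\]
By the independence of the $\ell$ percolations and of the retention decisions across different words, the family $\{\tilde Y_\sigma:\sigma\in\Sigma^*\}$ is i.i.d.\ Bernoulli$(p^\ell)$, and hence drives a fractal percolation $F'$ on $[0,1]^d$ with parameters $M$ and $p^\ell$.

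Next, I would identify the intersection of the $n$-th construction steps with the $n$-th step of $F'$ at the level of level-$n$ basic cubes. Each $F_n^{\{i\}}$ is a union of basic cubes $J_\sigma$ with $\sigma\in\Sigma^n$, and two distinct such cubes meet only in a face of dimension at most $d-1$. Thus, up to a set of $d$-dimensional Lebesgue measure zero, the intersection $F_n^{\{1\}}\cap\ldots\cap F_n^{\{\ell\}}$ equals the union of those $J_\sigma$, $\sigma\in\Sigma^n$, that are contained in every $F_n^{\{i\}}$. Now $J_\sigma\subset F_n^{\{i\}}$ iff $Y_{\sigma|k}^{\{i\}}=1$ for all $k\leq n$, so $J_\sigma$ belongs to all $\ell$ sets iff $\tilde Y_{\sigma|k}=1$ for all $k\leq n$, which is exactly the condition for $J_\sigma\subset F'_n$. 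Therefore $V_d(F_n^{\{1\}}\cap\ldots\cap F_n^{\{\ell\}})=V_d(F'_n)$ almost surely, which is even stronger than the claimed equality in distribution.

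Finally, the formulas for expectation and variance follow at once from Theorem~\ref{thm:var-vol} applied to $F'$, i.e.\ with $p^\ell$ substituted for $p$: the expectation is $(p^\ell)^n=p^{\ell n}$, and the variance splits into the two stated cases according to whether $p^\ell=M^{-d}$ or not. I do not expect a serious obstacle here; the only point requiring care is the coupling in the first step, namely checking that the product variables $\tilde Y_\sigma$ really are i.i.d.\ Bernoulli$(p^\ell)$ and that they generate a fractal percolation in the precise sense used in Section~\ref{sec:2}, which is immediate from independence across both the index $i$ and the word $\sigma$.
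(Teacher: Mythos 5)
Your proposal is correct and rests on exactly the same observation as the paper's proof, namely that a level-$n$ cube survives in the intersection if and only if all $\ell$ independent retention decisions along its ancestry are positive, which happens with probability $p^\ell$ per generation; the paper packages this as an identification of the Galton--Watson process counting the surviving cubes with that of a $p^\ell$-percolation, while you package it as an explicit coupling via the product variables $\tilde Y_\sigma$. Your coupling even yields almost sure equality of the volumes rather than just equality in distribution, but this is a cosmetic strengthening, not a different method, and the reduction to Theorem~\ref{thm:var-vol} is identical.
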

\begin{proof}
Let $N'_n$ denote the number of basic cubes of level $n$ contained in the intersection $I_n:=F_n^{\{1\}}\cap\ldots\cap F_n^{\{\ell\}}$. Note that a basic cube $J_\sigma$, $\sigma=\sigma_1\ldots\sigma_n\in\Sigma^n$, of level $n$ is contained in $I_n$ if and only if $J_{\sigma_1\ldots\sigma_{n-1}}$ is contained in $I_{n-1}$ and if for each $i=1,\ldots,\ell$, $Y_\sigma^{\{i\}}=1$, i.e.\ $J_\sigma$ survives in $F_n^{\{i\}}$. This implies that, for each basic cube of level $n-1$ contained in $I_{n-1}$, the random number of descendants (i.e.\ of subcubes of level $n$ contained in $I_n$) is $\text{Bin}(M^d, p^\ell)$-distributed, and hence $(N'_n)$ forms a Galton-Watson process that is equivalent to the one associated to a fractal percolation $F'$ on $[0,1]^d$ with parameters $M$ and $p^\ell$. Now the assertion follows from the fact, that
$V_d(I_n)$ equals $N'_n$ times $M^{-dn}$, the volume of a single basic cube of level $n$, which is also the volume $V_d(F'_n)$. Now the formulas for expectation and variance follow directly from Theorem~\ref{thm:var-vol}.
\end{proof}

\section[Expectation and variance of the surface area of Fn]{Expectation and variance of the surface area of $F_n$} \label{sec:var_surf}

For computing expectation and variance of the surface area of the $n$-th step $F_n$ of fractal percolation, one has to study those intersections $F_n^j\cap F_n^\ell$ for which $J_j$ and $J_\ell$ are direct neighbors. We call two basic cubes of level $n$ \emph{direct neighbors} if they are distinct and share a common facet. For the computations the following statement will be helpful. It will be combined below with Corollary~\ref{cor:l-indep-fps}.

\begin{lem} \label{lem:self-sim-j-2}
     For any $n\in\N$, $k\in\{0,\ldots,d\}$ and $j,\ell\in\Sigma$ such that $J_j$ and $J_\ell$ are direct neighbors,
   \begin{align*}
     V_k(F_n^j\cap F_n^\ell)\deq M^{-k} V_k(K_{n-1}^{\{1\}}\cap K_{n-1}^{\{2\}}) \cdot Y,
   \end{align*}
   where $K^{\{1\}}$ and $K^{\{2\}}$ are independent fractal percolations in $[0,1]^{d-1}$ (with the same parameters $M$ and $p$ as $F$), $K_{n}^{\{1\}}$, $K_{n}^{\{2\}}$ are their $n$-th construction steps and $Y$ is a Bernoulli variable with parameter $p^2$ independent of $K^{\{1\}}$, $K^{\{2\}}$.\\ In particular, one obtains that 
   \begin{align*}
     \E V_k(F_n^j\cap F_n^\ell)& =\frac{p^2}{M^{k}} \E V_k(K_{n-1}^{\{1\}}\cap K_{n-1}^{\{2\}}) \text{ and } \\
     \Var (V_k(F_n^j\cap F_n^\ell))&=\frac{p^2}{M^{2k}}\left(\Var( V_k(K_{n-1}^{\{1\}}\cap K_{n-1}^{\{2\}}))\right.\\&\left.\qquad\qquad + (1-p^2)(\E V_k(K_{n-1}^{\{1\}}\cap K_{n-1}^{\{2\}}))^2\right). 
   \end{align*}
  \end{lem}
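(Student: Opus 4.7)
The plan is to imitate the proof of Lemma~\ref{lem:self-sim-j}, first peeling off the level-1 Bernoulli decisions $Y_j$ and $Y_\ell$ as an independent multiplicative factor, and then recognising the remaining structure on the shared facet $H:=J_j\cap J_\ell$ as a pair of scaled independent $(d-1)$-dimensional fractal percolations. Writing $F_n^j=L_1^j\cap F_{2,n}^j$ and likewise for $\ell$, we have
$$F_n^j\cap F_n^\ell=(L_1^j\cap L_1^\ell)\cap(F_{2,n}^j\cap F_{2,n}^\ell),$$
and the first factor equals $H$ when $Y:=Y_jY_\ell=1$ and is empty otherwise. Since $Y_j$ and $Y_\ell$ are independent Bernoulli$(p)$ variables and both are independent of the level-$\geq 2$ decisions generating $F_{2,n}^j$ and $F_{2,n}^\ell$, the variable $Y$ is Bernoulli$(p^2)$-distributed and independent of $V_k(H\cap F_{2,n}^j\cap F_{2,n}^\ell)$. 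Hence
$$V_k(F_n^j\cap F_n^\ell)=V_k(H\cap F_{2,n}^j\cap F_{2,n}^\ell)\cdot Y.$$

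The second step is to identify $H\cap F_{2,n}^j$ in distribution as a scaled version of the $(n-1)$-th step of a $(d-1)$-dimensional fractal percolation on $[0,1]^{d-1}$ with parameters $M$ and $p$. Among the $M^d$ level-2 subcubes of $J_j$, exactly $M^{d-1}$ are adjacent to $H$, and their traces on $H$ form a partition of $H$ into $(d-1)$-cubes of side length $1/M^2$; the same combinatorial pattern recurs at every subsequent level. Letting $\psi_j$ denote the similarity of contraction ratio $1/M$ mapping $[0,1]^{d-1}$ onto $H$, this yields a coupling under which $H\cap F_{2,n}^j\deq\psi_j(K^{\{1\}}_{n-1})$ with $K^{\{1\}}$ a $(d-1)$-dimensional fractal percolation on $[0,1]^{d-1}$. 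The analogous identification on the $\ell$-side produces an \emph{independent} $K^{\{2\}}$, since the two sides use disjoint subfamilies of the underlying Bernoulli variables $\{Y_\sigma\}$. Joint independence gives $H\cap F_{2,n}^j\cap F_{2,n}^\ell\deq\psi_j(K^{\{1\}}_{n-1}\cap K^{\{2\}}_{n-1})$, and the homogeneity of $V_k$ of degree $k$ then supplies the factor $M^{-k}$. Multiplying by the independent Bernoulli$(p^2)$ factor $Y$ yields the claimed distributional identity; the expressions for $\E$ and $\Var$ follow routinely from $\E Y=\E Y^2=p^2$, $\Var Y=p^2(1-p^2)$, and the standard identity $\Var(XY)=\E(Y^2)\Var X+(\E X)^2\Var Y$ for independent $X$ and $Y$.

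The main obstacle is the combinatorial bookkeeping in the second step: verifying that the layer of $F_{2,n}^j$ adjacent to $H$ is genuinely distributed like a scaled $(d-1)$-dimensional fractal percolation, and that the $j$- and $\ell$-sides are independent. This boils down to the observation that the $M^{d-1}$ children of a $d$-cube adjacent to a fixed face naturally inherit an $M\times\cdots\times M$ grid structure on that face, so that the induced survival decisions at each level reproduce exactly the offspring law of a $(d-1)$-dimensional fractal percolation Galton-Watson tree, carried along the facet $H$ by the similarity $\psi_j$.
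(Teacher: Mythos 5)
Your proposal is correct and follows essentially the same route as the paper: peel off the level-one indicators $Y_j,Y_\ell$ as an independent Bernoulli$(p^2)$ factor, identify $F_{2,n}^j\cap F_{2,n}^\ell$ (which already lies in the shared facet $H=J_j\cap J_\ell$, so your extra intersection with $H$ is harmless) as a $1/M$-scaled copy of the intersection of two independent $(d-1)$-dimensional fractal percolations, and conclude via homogeneity of $V_k$ and the standard product formulas for mean and variance. The paper is terser on the identification of the facet traces as independent $(d-1)$-dimensional fractal percolations (it leans on the scaling relation \eqref{eq:scaling-of-F2nj} and the analogous modelling argument in the proof of Proposition~\ref{prop:main-estimate}), whereas you spell out the combinatorial bookkeeping explicitly; the substance is the same.
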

   \begin{proof}
First observe that, similarly as in the proof of Lemma~\ref{lem:self-sim-j},
  \begin{align}
    V_k(F_{n}^j\cap F_n^\ell)=V_k(L_1^j\cap F_{2,n}^j\cap L_1^\ell\cap F_{2,n}^\ell)=V_k(F_{2,n}^j\cap F_{2,n}^\ell)\cdot Y_j\cdot Y_\ell,
  \end{align}
  where $Y_j$ and $Y_\ell$ are the Bernoulli variables which determine whether $J_j$ and $J_\ell$, respectively, are kept in the first step.
  Note that $Y_j, Y_\ell, F_{2,n}^j, F_{2,n}^\ell$ are independent and that the product $Y_j\cdot Y_\ell$ is a Bernoulli variable with parameter $p^2$. Now observe that, by \eqref{eq:scaling-of-F2nj}, and with $K^{\{i\}}$ as in the statement, 
  for any $n\in\N$, $F_{2,n}^j\cap F_{2,n}^\ell\deq \varphi(K^{\{1\}}_{n-1}\cap K^{\{2\}}_{n-1})$, where $\varphi:\R^{d-1}\to \R^d$ is the similarity with contraction ratio $1/M$ mapping $[0,1]^{d-1}$ to $J_j\cap J_\ell$.
  This implies in particular that
  \begin{align}
    V_k(F_{2,n}^j\cap F_{2,n}^\ell)\deq V_k(\varphi(K^{\{1\}}_{n-1}\cap K^{\{2\}}_{n-1}))=M^{-k} V_k(K^{\{1\}}_{n-1}\cap K^{\{2\}}_{n-1}).
  \end{align}
  Combining both equations proves the first assertion. The expressions for expectation and variance follow easily.
   \end{proof}


Now we have all the ingredients to start the proof of Theorem~\ref{thm:var-surf}.


\begin{proof}[Proof of Theorem~\ref{thm:var-surf}] Fix $n\in\N$. For $j,\ell\in\Sigma$, 
let $X_n:=V_{d-1}(F_n)$, $X_n^{j}:=V_{d-1}(F_n^j)$ and
  \begin{align*}
    X_n^{j,\ell}:=V_{d-1}(F_n^j\cap F_n^\ell).
  \end{align*}
  Note that all these random variables are nonnegative. The variables $X_n^j$, $j\in\Sigma$ are independent and identically distributed. Note also that $X_n^{j,\ell}=0$, whenever the corresponding cubes $J_j$ and $J_\ell$ are not direct neighbors.
  By the inclusion-exclusion principle, we have
  \begin{align} \label{eq:incl-excl}
     X_n&=\sum_{j\in\Sigma} X_n^j - \sum_{\{j,\ell\}\subset\Sigma} X_n^{j,\ell}.
  \end{align}
  For the expectation this implies
  \begin{align*}
    \E(X_n)&=\sum_{j\in\Sigma} \E(X_n^j) - \sum_{\{i,\ell\}\subset\Sigma} \E\left(X_n^{i,\ell}\right).
  \end{align*}
  By Lemma~\ref{lem:self-sim-j}, the first sum simplifies to $Mp \E(X_{n-1})$. In the second term it is enough to sum over those couples $\{i,\ell\}$ for which the corresponding cubes $J_i$ and $J_\ell$ are direct neighbors. 
  There are $d (M-1) M^{d-1}$ such couples in dimension $d$. All other summands are zero. Now observe that to direct neighbors 
  Lemma~\ref{lem:self-sim-j-2} can be applied, which in combination with Corollary~\ref{cor:l-indep-fps} yields
  \begin{align} \label{eq:exp-Xnil}
     \E\left(X_n^{i,\ell}\right)=\frac{p^2}{M^{d-1}} \E V_{d-1}(K_{n-1}^{\{1\}}\cap K_{n-1}^{\{2\}})=\frac{p^2}{M^{d-1}} p^{2(n-1)}=\frac{p^{2n}}{M^{d-1}}.
  \end{align}
  Therefore, we conclude that, for any $n\in\N$,
  \begin{align*}
     \E(X_n)&=Mp \E(X_{n-1})-d (M-1) M^{d-1}\cdot M^{1-d} p^{2n}=Mp \E(X_{n-1})-d (M-1)p^{2n},
  \end{align*}
  where $\E(X_0)=V_{d-1}(J)=d$. This is a recursion relation for the expectation and leads to
   \begin{align*}
     \E(X_n)&=(Mp)^n \E(X_{0})-d (M-1) \sum_{i=1}^n (Mp)^{n-i} p^{2i}\\
     &=d (Mp)^n \left(1-\frac{(M-1)p}{M-p}\left[1-\left(\frac pM\right)^n\right]\right),
  \end{align*}
  which is equivalent to the expression stated in equation \eqref{eq:exp-surf-Exp}, completing the proof of this equation.

  For the computation of the variance of $X_n$, the following statement is useful.

  \begin{lem} \label{lem:cov-basics}
  For $i=1,2$, let $Z_i,Z_i'$ be stochastically independent random variables. Then
  \begin{align*}
    \Cov(Z_1\cdot Z_1', Z_2\cdot Z_2')=\Cov(Z_1,Z_2)\cdot \E(Z_1'\cdot Z_2')+\E Z_1\cdot \E Z_2\cdot \Cov(Z_1', Z_2').
   \end{align*}
    In particular,
    $\Var(Z_1\cdot Z_1')=\Var(Z_1)\cdot \E(Z_1'\cdot Z_1')+(\E Z_1)^2\cdot \Var(Z_1').$
\begin{proof}  A straightforward computation that uses the assumed independence yields the first formula,
and the second one is an easy consequence obtained by setting $Z_1=Z_2$ and $Z_1'=Z_2'$.
\end{proof}
\end{lem}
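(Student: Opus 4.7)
The identity is a direct algebraic manipulation once the independence is correctly interpreted: the natural reading of the hypothesis (and the one forced by the appearance of $\Cov(Z_1,Z_2)$ and $\Cov(Z_1',Z_2')$ on the right-hand side) is that the pair $(Z_1,Z_2)$ is stochastically independent of the pair $(Z_1',Z_2')$, while no independence between $Z_1$ and $Z_2$ or between $Z_1'$ and $Z_2'$ is assumed. I would first state this convention explicitly at the start of the proof to remove any ambiguity.

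The main computation is then straightforward. Starting from the definition
\[
  \Cov(Z_1 Z_1', Z_2 Z_2') = \E[Z_1 Z_1' Z_2 Z_2'] - \E[Z_1 Z_1']\,\E[Z_2 Z_2'],
\]
I would apply the independence of $(Z_1,Z_2)$ from $(Z_1',Z_2')$ to factor $\E[Z_1 Z_1' Z_2 Z_2'] = \E[Z_1 Z_2]\,\E[Z_1' Z_2']$, and similarly $\E[Z_i Z_i'] = \E Z_i \cdot \E Z_i'$ for $i=1,2$. This reduces the right-hand side to
\[
  \E[Z_1 Z_2]\,\E[Z_1' Z_2'] - \E Z_1\,\E Z_2\,\E Z_1'\,\E Z_2'.
\]
The standard trick of adding and subtracting $\E Z_1\,\E Z_2\,\E[Z_1' Z_2']$ then regroups this expression as
\[
  \bigl(\E[Z_1 Z_2] - \E Z_1\,\E Z_2\bigr)\E[Z_1' Z_2']
  + \E Z_1\,\E Z_2\bigl(\E[Z_1' Z_2'] - \E Z_1'\,\E Z_2'\bigr),
\]
which is exactly the claimed identity.

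The variance formula then follows immediately by specializing $Z_2 := Z_1$ and $Z_2' := Z_1'$, since $\Cov(Z_1,Z_1) = \Var(Z_1)$ and $\Cov(Z_1',Z_1') = \Var(Z_1')$.

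There is no real obstacle here; the only point requiring care is the precise meaning of the independence hypothesis, since the weaker reading "$Z_i \perp Z_i'$ for each $i$" alone does not suffice to factor the joint expectation $\E[Z_1 Z_1' Z_2 Z_2']$. I would therefore be explicit about the independence of the pairs at the outset.
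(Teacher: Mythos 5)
Your proof is correct and is exactly the ``straightforward computation'' that the paper leaves to the reader: expand the covariance, factor the mixed expectation using independence of the pair $(Z_1,Z_2)$ from the pair $(Z_1',Z_2')$, and add and subtract $\E Z_1\,\E Z_2\,\E[Z_1'Z_2']$. Your remark that the hypothesis must be read as independence of the two pairs (not merely $Z_i\perp Z_i'$ for each $i$) is apt and consistent with how the lemma is applied later in the paper, where the primed variables are functions of the first-level Bernoulli variables and the unprimed ones are jointly independent of them.
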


  Now we are ready to compute the variance of $X_n$. Our starting point is again the central equation \eqref{eq:incl-excl}, which allows to split the variance $\Var(X_n)$ into several terms which will then be treated separately. Equation \eqref{eq:incl-excl} implies that
  \begin{align}
    \notag \Var(X_n)&=\Var\left(\sum_{j\in\Sigma} X_n^j\right) +
    \Var\left( \sum_{\{i,\ell\}\subset\Sigma} X_n^{i,\ell}\right)-2\cdot\Cov\left(\sum_{j\in\Sigma} X_n^j,\sum_{\{i,\ell\}\subset\Sigma} X_n^{i,\ell}\right)\\
   \label{eq:sur-var-proof1} &= \sum_{j\in\Sigma}\Var(X_n^j) + \sum_{\{i,\ell\}\subset\Sigma} \Var(X_n^{i,\ell})+ \sum_{
    \{i,\ell\}\neq\{i',\ell'\}} \Cov(X_n^{i,\ell},X_n^{i',\ell'})\\
   \notag  &\qquad -2\sum_{j\in\Sigma}\sum_{\{i,\ell\}\subset\Sigma}\Cov\left(X_n^j,X_n^{i,\ell}\right),
  \end{align}
  where we have used the independence of the $X_n^j$ in the first sum. Note that the sum in the third term extends over all pairs $(\{i,\ell\},\{i',\ell'\})$ of subsets of $\Sigma$ with two elements such that $\{i,\ell\}\neq\{i',\ell'\}$. Since the $X_n^j$ are identically distributed, we get for the first term above
    \begin{align*}
  \sum_{j\in\Sigma}\Var(X_n^j)&=M^d\cdot \Var(X_n^1)=M^d \frac{p}{M^{2(d-1)}}\left(\Var( X_{n-1})+ (1-p)(\E X_{n-1})^2\right)\\
  &=\frac{p}{M^{d-2}}\Var( X_{n-1})+ \frac{p(1-p)}{M^{d-2}}(\E X_{n-1})^2,
  \end{align*}
where for the second equality we employed equation \eqref{eq:self-sim-j-Exp} of Lemma~\ref{lem:self-sim-j}. By equation \eqref{eq:exp-surf-Exp}, we have
\begin{align*}
  (\E X_{n-1})^2&=(Mp)^{2n} \frac {d^2(1-p)^2}{p^2(M-p)^2} \left(1+\frac{M-1}{1-p}\left(\frac pM\right)^{n}\right)^2
\end{align*}
and thus
   \begin{align} \label{eq:sur-var-recursion1}
  \sum_{j\in\Sigma}\Var(X_n^j)&=\frac{p}{M^{d-2}}\Var( X_{n-1})+ \frac {d^2(1-p)^3}{M^{d-2}p(M-p)^2}\times\\ \notag &\quad\times(Mp)^{2n} \left(1+ \frac{M-1}{1-p}\left(\frac pM\right)^n\right)^2.
  \end{align}
  Inserting this expression into equation \eqref{eq:sur-var-proof1} yields a recursion relation for the variance $\Var(X_n)$.  In order to be of use, it remains to find explicit expressions for the other terms in \eqref{eq:sur-var-proof1}. The discussion below of the three remaining terms will show that all of them are of some order lower than $(Mp)^{2n}$ and thus do not contribute to first term in \eqref{eq:exp-surf-Var}. Moreover, it will become clear that for certain parameter combinations the last term in \eqref{eq:sur-var-proof1} provides a contribution to the second term in \eqref{eq:exp-surf-Var}.

  \paragraph{ \bf Analysis of the 2nd term in \eqref{eq:sur-var-proof1}} First recall that $X_n^{j,\ell}=V_{d-1}(F_n^j\cap F_n^\ell)=0$, whenever the corresponding cubes $J_j$ and $J_\ell$ are not direct neighbors, which implies  $\Var(X_n^{j,\ell})=0$ in this case. Recall also that there are $d(M-1)M^{d-1}$ pairs $\{j,\ell\}$ such that $J_j$ and $J_\ell$ are direct neighbors. By Lemma~\ref{lem:self-sim-j-2}, we get for all such pairs $\{j,\ell\}$ that $\Var(X_n^{j,\ell})$ equals 
  \begin{align*}
     &\frac{p^2}{M^{2(d-1)}}\left(\Var( V_{d-1}(K_{n-1}^{\{1\}}\cap K_{n-1}^{\{2\}}))+ (1-p^2)(\E V_{d-1}(K_{n-1}^{\{1\}}\cap K_{n-1}^{\{2\}}))^2\right),
  \end{align*}
where $K^{\{1\}}, K^{\{2\}}$ are independent $d-1$-dimensional fractal percolations. Hence, by Corollary~\ref{cor:l-indep-fps}, we have $(\E V_{d-1}(K_{n-1}^{\{1\}}\cap K_{n-1}^{\{2\}}))^2=p^{4(n-1)}$ and, for $p^2\neq M^{1-d}$,
\begin{align*}
  \Var( V_{d-1}(K_{n-1}^{\{1\}}\cap K_{n-1}^{\{2\}}))&=p^{4(n-1)} \frac{1-p^2}{M^{d-1}p^2-1}\left(1-\left(\frac 1{M^{d-1}p^2}\right)^{n-1}\right).
\end{align*}
For $p^2= M^{1-d}$, a different expression for the variance is provided by Corollary~\ref{cor:l-indep-fps}.
Inserting these expressions into the above formula yields
\begin{align}\label{eq:var-Xnjl-3}
     \Var(X_n^{j,\ell})&=\frac{1-p^2}{M^{d-1}}\cdot p^{4n} \begin{cases}
 \frac{1}{M^{d-1}p^2-1} \left(1 - \left(M^{d-1}p^{2}\right)^{-n}\right), &M^{d-1}p^2\neq 1,\\
  n, &M^{d-1}p^2= 1.
     \end{cases}
  \end{align}
  This provides explicit expressions for the 2nd term in \eqref{eq:sur-var-proof1}.

  \paragraph{\bf Analysis of the 3rd term in \eqref{eq:sur-var-proof1}} First note that the covariances of the form $\Cov(X_n^{i,\ell},X_n^{i',\ell'})$ appearing in the third term vanish whenever $\{i,\ell\}\cap\{i',\ell'\}=\emptyset$, simply because the variables $X_n^{i,\ell}$ and $X_n^{i',\ell'}$ are determined by disjoint subfamilies of the basic random variables $Y_\sigma$, $\sigma\in\Sigma^*$ in this case and are therefore independent. Since also $\{i,\ell\}\neq\{i',\ell'\}$, all relevant terms in the third sum refer to pairs of the form $(\{i,\ell\},\{i,\ell'\})$ with $\ell\neq\ell'$, where $J_\ell$ as well as $J_{\ell'}$ are direct neighbors of $J_i$. Now observe that the covariance $\Cov(X_n^{i,\ell},X_n^{i,\ell'})$ depends on the relative position of $J_\ell$ and $J_{\ell'}$, since this determines which of the basic variables $Y_\sigma$ determining $F_n^i$ are relevant for both quantities $X_n^{i,\ell}$ and $X_n^{i,\ell'}$. Since $J_i\cap J_\ell$ and $J_i\cap J_{\ell'}$ are facets of $J_i$ and any two facets of a cube are either disjoint (if they lie on opposite sides, i.e.\ in parallel hyperplanes) or intersect in a $(d-2)$-face (i.e. in a cube of dimension $d-2$), we are left with exactly these two cases. The remaining task is now to determine how many covariance terms there are for each of these two different types and to provide an explicit expression for each of them. We start with the easier one, for which the cubes $J_\ell$, $J_i$ and $J_{\ell'}$ have to be arranged in a row (in this order) in one of the $2d$ directions. (Note that due to the independent summation through all couples $\{i,\ell\}$ and all couples $\{i,\ell'\}$, it is a different contribution if $\ell$ and $\ell'$ are interchanged.) There are $2d(M-2)M^{d-1}$ such terms. (For $M=2$ this situation is not possible and accordingly this number is zero.)
  The only basic variable which affects both quantities $X_n^{i,\ell}$ and $X_n^{i,\ell'}$ is $Y_i$, i.e.\ only the first construction step affects this covariance term.   To compute it, we separate the influence of the first construction step from the rest (just as in Lemma~\ref{lem:self-sim-j}) and use the independence. We have for any $n\in\N$,
  \begin{align*}
     \Cov(X_n^{i,\ell},X_n^{i,\ell'})&= \Cov\left(V_{d-1}(F_{2,n}^i\cap L_1^i\cap F_n^\ell),V_{d-1}(F_{2,n}^i\cap L_1^{i}\cap F_n^{\ell'})\right)\\
     &=\Cov\left(V_{d-1}(F_{2,n}^i\cap F_n^\ell)\cdot Y_i,V_{d-1}(F_{2,n}^i\cap F_n^{\ell'})\cdot Y_i\right).
     \end{align*}
     Setting $\widetilde{X}_n:=V_{d-1}(F_{2,n}^i\cap F_n^\ell)$ and $\widetilde{X}_n':=V_{d-1}(F_{2,n}^i\cap F_n^{\ell'})$ and noting that these two variables are independent, identically distributed and independent of $Y_i$ with mean given by
     \begin{align} \label{eq:exp-Xn-Xnil}
       \E X_n^{i,\ell}= \E(V_{d-1}(F_{2,n}^i\cap F_n^\ell)\cdot Y_i)=\E \widetilde{X}_n\cdot \E Y_i=\E \widetilde{X}_n\cdot p,
     \end{align}
     we obtain, using the first formula in Lemma~\ref{lem:cov-basics}, that
  \begin{align*}
     \Cov(X_n^{i,\ell},X_n^{i,\ell'})&= \Cov\left(\widetilde{X}_n,\widetilde{X}_n'\right) \E(Y_i\cdot Y_i)+ \E \widetilde{X}_n\cdot \E \widetilde{X}_n'\cdot \Cov(Y_i,Y_i)\\
     &=\left(\E \widetilde{X}_n\right)^2 \Var(Y_i) =\frac {1-p}{p} \left(\E X_n^{i,\ell}\right)^2.
     \end{align*}
     By equation~\eqref{eq:exp-Xnil}, this yields for the $2d(M-2)M^{d-1}$ terms for which the corresponding cubes $J_\ell$, $J_i$ and $J_{\ell'}$ are arranged in a row, and for any $n\in\N$,
     \begin{align} \label{eq:3rd-term-in-a-row}
     \Cov(X_n^{i,\ell},X_n^{i,\ell'})&= \frac {1-p}{p} \left(\frac{p^{2n}}{M^{d-1}}\right)^2=\frac {1-p}{M^{2(d-1)}}p^{4n-1}.
     \end{align}
     Now let us look at the second type of covariance term occurring in the third sum in \eqref{eq:sur-var-proof1}, for which the involved cubes $J_i$, $J_\ell$ and $J_{\ell'}$ intersect in a $(d-2)$-face of $J_i$. First let us determine the number of such terms. It is convenient, to determine first the number of $(d-2)$-dimensional cubes, at which this intersection can happen. There are $\binom{d}{2}(M-1)^2M^{d-2}$ such cubes. (Indeed, choose $d-2$ out of $d$ directions to span the direction space $L$ of (the affine hull of) the cube (or equivalently, choose 2 out of $d$ to span the orthogonal complement of $L$. For each direction space there are $(M-1)^2M^{d-2}$ such cubes, $(M-1)^2$ in each $2$-dimensional ``layer'', and there are $M^{d-2}$ such layers.)  Each such $(d-2)$-cube $C$ is surrounded by four $d$-dimensional cubes, hence there are four choices for $J_i$. Then the two cubes which are direct neighbors of $J_i$ must be $J_\ell$ and $J_{\ell'}$, and the only choice we can make is which one is $J_\ell$. Hence for each $(d-2)$-cube there are eight possible choices and so the total number of terms of the second type in the third sum of \eqref{eq:sur-var-proof1} is $8\binom{d}{2}(M-1)^2M^{d-2}$. It remains to compute the covariance for this type.

     Setting (in analogy with the first type)
     $$
     \widehat{X}_n:=V_{d-1}(F_{2,n}^i\cap F_{2,n}^\ell) \text{ and } \widehat{X}_n':=V_{d-1}(F_{2,n}^i\cap F_{2,n}^{\ell'})
     $$
     and noting that these two variables are independent, identically distributed and independent of $Y_i, Y_\ell$, and $Y_{\ell'}$, we obtain for the mean
     \begin{align} \label{eq:exp-Xn-Xnil2}
       \E X_n^{i,\ell}= \E(V_{d-1}(F_{2,n}^i\cap F_{2,n}^\ell)\cdot Y_i\cdot Y_\ell)=\E \widehat{X}_n\cdot \E Y_i\cdot \E Y_\ell=\E \widehat{X}_n\cdot p^2,
     \end{align}
     and for the covariance, using the first formula in Lemma~\ref{lem:cov-basics} and \eqref{eq:exp-Xnil},
  \begin{align}
     \Cov(X_n^{i,\ell},X_n^{i,\ell'})&= \Cov\left(\widehat{X}_n\cdot Y_i Y_\ell,\widehat{X}_n'\cdot Y_iY_{\ell'}\right) \notag\\
     &=\Cov\left(\widehat{X}_n,\widehat{X}_n'\right) \E(Y_i^2 Y_\ell Y_{\ell'})+ \E \widehat{X}_n\cdot \E \widehat{X}_n'\cdot \Cov(Y_i Y_\ell,Y_i Y_{\ell'})\notag\\
     &=\Cov\left(\widehat{X}_n,\widehat{X}_n'\right) \cdot p^3+\left(\E \widehat{X}_n\right)^2 p^3(1-p) \notag\\
     &=\Cov\left(\widehat{X}_n,\widehat{X}_n'\right) \cdot p^3+\frac{1-p}{M^{2(d-1)}}\cdot p^{4n-1}. \label{eq:cov-Xnil}
      \end{align}
     But this time $\widehat{X}_n$ and $\widehat{X}_n'$ are not independent and so the covariance term does not vanish. In order to derive a recursion for the left hand side, notice that the intersection $J_i\cap J_\ell$ is $(d-1)$-dimensional and intersects $M^{d-1}$ of the second level cubes $J_{ij}$ contained in $J_i$. The intersection $J_i\cap J_\ell\cap J_{\ell'}$ is $(d-2)$-dimensional and intersects $M^{d-2}$ of the second level cubes $J_{ij}$ contained in $J_i$. Denote those latter cubes by $J_{i1}, J_{i2},\ldots, J_{iM^{d-2}}$ and let $R$ be the union of all the remaining second level cubes contained in $J_i$, i.e., set $R:=\cup\{J_{ij}:j\in\Sigma\setminus\{1,\ldots,M^{d-2}\}\}$. Observe that
     \begin{align*}
       \widehat{X}_n= V_{d-1}(F_{2,n}^i\cap F_{2,n}^\ell)=\underbrace{V_{d-1}(F_{2,n}^i\cap R\cap F_{2,n}^\ell)}_{=:\widetilde{R}}+\sum_{k=1}^{M^{d-2}} \underbrace{V_{d-1}(F_{2,n}^i\cap J_{ik}\cap F_{2,n}^\ell)}_{=:\widetilde{Q}_k},
     \end{align*}
     where all the terms on the right are independent. The same holds with $\ell$ replaced by $\ell'$, for which we denote the corresponding terms by $\widetilde{R}'$ and $\widetilde{Q}_k'$, respectively. What happens in $J_{ik}$ is now again a scaled version of what happens in $J_i$. More precisely, we have, for any $k\in\{1,\ldots, M^{d-2}\}$,
     \begin{align} \label{eq:3rd-term-type2}
        \widetilde{Q}_k & 
        \deq M^{-d+1}V_{d-1}(F_{n-1}^i\cap F_{n-1}^\ell)=M^{-d+1} X_{n-1}^{i,\ell}.
     \end{align}
     (Note that the corresponding sets are not equal in distribution due to intersections with neighboring cubes which are fortunately of lower dimension and thus do not contribute to the intrinsic volume of order $d-1$.)
     Taking into account the dependence structure of $\widetilde{R},\widetilde{R}',\widetilde{Q}_k,\widetilde{Q}_k$ and \eqref{eq:3rd-term-type2}, we conclude that
     \begin{align*}
        \Cov\left(\widehat{X}_n,\widehat{X}_n' \right)&=\Cov\left(\widetilde{R}+\sum_k \widetilde{Q}_k,\widetilde{R}'+\sum_k \widetilde{Q}_k'\right)\\
        &=\sum_k \Cov\left(\widetilde{Q}_k,\widetilde{Q}_k'\right)\\
        &= M^{d-2} \cdot \Cov\left(M^{1-d}\cdot X_{n-1}^{i,\ell},M^{1-d}\cdot X_{n-1}^{i,\ell'}\right)\\
        &= M^{-d} \Cov\left(X_{n-1}^{i,\ell},X_{n-1}^{i,\ell'}\right).
     \end{align*}
     Plugging this into \eqref{eq:cov-Xnil}, yields the desired recursion for the covariance of the second type: 
     \begin{align}
        \Cov(X_n^{i,\ell},X_n^{i,\ell'})&= \frac {p^3}{M^d}\cdot \Cov(X_{n-1}^{i,\ell},X_{n-1}^{i,\ell'}) + \frac{1-p}{M^{2(d-1)}}\cdot p^{4n-1},\qquad n\in\N,
     \end{align}
     where $\Cov(X_0^{i,\ell},X_0^{i,\ell'}):=0$. We conclude that, for $n\in\N$,
     \begin{align}
        \Cov(X_n^{i,\ell},X_n^{i,\ell'}) &=  \frac{1-p}{M^{2(d-1)}}\cdot p^{4n-1} \sum_{k=0}^{n-1} \left(\frac 1{M^dp}\right)^k \notag\\
       &=\begin{cases}
         \frac{1-p}{M^{d-2}(M^d p-1)}\cdot p^{4n}\left(1-(M^{d}p)^{-n}\right), & M^dp\neq 1, \\ \frac{1-p}{M^{2(d-1)}}\cdot p^{4n-1} \cdot n, & M^dp= 1.
      \end{cases} \label{eq:3rd-term-type2-3}
     \end{align}

     \paragraph{\bf Analysis of the last term in \eqref{eq:sur-var-proof1}} First note that the covariances $\Cov(X_n^{j},X_n^{i,\ell})$ in the last term vanish whenever $j\notin\{i,\ell\}$, because the variables $X_n^{j}$ and $X_n^{i,\ell}$ are determined by disjoint subfamilies of the basic random variables $Y_\sigma$, $\sigma\in\Sigma^*$ in this case and are therefore independent. Hence, all relevant terms in the last sum refer to pairs $(j, \{i,\ell\})$ such that $J_i$ and $J_\ell$ are direct neighbors and $j\in\{i,\ell\}$. Let us first determine the number of such terms. For each couple $\{i,l\}$ there are two choices for $j$ (namely, $j=i$ and $j=\ell$), and there are $d(M-1)M^{d-1}$ couples such that $J_i$ and $J_\ell$ are direct neighbors. Hence there are $2d(M-1)M^{d-1}$ such terms.
      To find an expression for this covariance, we choose $j=i$ without loss of generality. Again we start by separating the influence of the first construction step from the rest. Setting $\widehat{X}_n:=V_{d-1}(F_{2,n}^i\cap F_{2,n}^\ell)$ and $\check{X}_n:=V_{d-1}(F_{2,n}^i)$ we infer, using the first formula in Lemma~\ref{lem:cov-basics},
  \begin{align*}
     \Cov(X_n^{i},X_n^{i,\ell}) 
     &=\Cov\left(V_{d-1}(F_{2,n}^i)\cdot Y_i,V_{d-1}(F_{2,n}^i\cap F_{2,n}^{\ell})\cdot Y_iY_\ell\right)\\
     &=\Cov\left(\check{X}_n,\widehat{X}_n\right) \E\left(Y_i^2Y_\ell\right)+ \E V_{d-1}(F_{2,n}^i)\cdot \E \widehat{X}_n\cdot \Cov(Y_i, Y_i Y_\ell)\\
     &=\Cov\left(\check{X}_n,\widehat{X}_n\right)\cdot p^2 + \E V_{d-1}(F_{2,n}^i) \cdot \E \widehat{X}_n\cdot p^2(1-p).
     \end{align*}
    Now recall from \eqref{eq:exp-Xn-Xnil2} that $\E \widehat{X}_n\cdot p^2=\E X_n^{i,\ell}$, for which an explicit expression is provided in \eqref{eq:exp-Xnil}. Moreover, by \eqref{eq:1-lem2-1}, we have
    $\E \check{X}_n=M^{1-d} \E V_{d-1}(F_{n-1})$ and for the latter an expression is given by \eqref{eq:exp-surf-Exp}. Hence, setting $\gamma_n:=\Cov \left(\check{X}_{n},\widehat{X}_{n}\right)$, $n\in\N$, (and noting that $\gamma_1=0$), we infer that, for any $n\in\N$,
   \begin{align} \label{eq:4th-term-expr1}
     \Cov(X_n^{i},X_n^{i,\ell})
     &= 
     \gamma_n \cdot p^2 + (Mp^3)^{n} \frac {d(1-p)^2}{M^{2(d-1)}p(M-p)} \left(1+\frac{M-1}{1-p}\left(\frac pM\right)^{n}\right).
   \end{align}
     Also this time the covariance term $\gamma_n$ on the right does not vanish. But we can derive a recursion for $\gamma_n$ with a similar approach as the one used for the second type in the third term.
     The intersection $J_i\cap J_\ell$ is $(d-1)$-dimensional and intersects $M^{d-1}$ of the second level cubes $J_{ij}$ contained in $J_i$. Denote those latter cubes by $Q_1, Q_2,\ldots, Q_{M^{d-1}}$ and let $R$ be the union of all the remaining second level cubes contained in $J_i$, i.e. $R=\cup\{J_{ij}:j\in\Sigma, J_{ij}\neq Q_m $ for all $m\}$. Observe that $V_{d-1}(F_{2,n}^i\cap R\cap F_{2,n}^\ell)=0$ and therefore
     \begin{align*}
       \widehat{X}_n= V_{d-1}(F_{2,n}^i\cap F_{2,n}^\ell)=\sum_{k=1}^{M^{d-1}} \underbrace{V_{d-1}(F_{2,n}^i\cap Q_k\cap F_{2,n}^\ell)}_{=:\widehat{Q}_k},
     \end{align*}
     where the terms on the right are independent. What happens in $Q_k$ is now again a scaled version of what happens in $J_i$. More precisely, we have, for $k\in\{1,\ldots, M^{d-1}\}$,
     \begin{align*} 
        \widehat{Q}_k & 
        \deq M^{1-d}V_{d-1}(F_{n-1}^i\cap F_{n-1}^\ell)=M^{1-d} X_{n-1}^{i,\ell}.
     \end{align*}
     Observe that, for any $n\in\N$,
     \begin{align} \label{eq:sur-proof-4th-term}
        \gamma_n&= 
        \Cov\left(\check{X}_{n},\sum_{k=1}^{M^{d-1}} \widehat{Q}_k\right)
        =\sum_{k=1}^{M^{d-1}} \Cov\left(\check{X}_{n},\widehat{Q}_k\right).
     \end{align}
     Now we split $\check{X}_{n}=V_{d-1}(F_{2,n}^i)$ in a similar way using the decomposition $J_i=R\cup\bigcup_k Q_k$. It is clear that the main contribution of the $k$-th term above will come from $\Cov(V_{d-1}(F_{2,n}^i\cap Q_k),\widehat{Q}_k)$ but there are further contributions this time which are due to the fact that the sets $F_{2,n}^i\cap Q_j$ are full-dimensional such that their intersections cannot be neglected. In fact, in order to separate the contributions of the cubes $Q_k$ we need a more refined notation. Without loss of generality, we can assume that the basic cubes $J_{ij}$ contained in $J_i$ are enumerated in such a way that $J_{ik}=Q_k$ for $k=1,\ldots,M^{d-1}$. Let $F_{2,n}^{ij}$ be the union of those cubes $J_\sigma$ of level $n$ (i.e.\ $\sigma=\sigma_1\ldots\sigma_n\in\Sigma^n$) contained in $F_{2,n}$ such that $\sigma_1=i$ and $\sigma_2=j$, $j\in\Sigma$, and let $F_{2,n}^{i,R}:=\bigcup_{j>M^{d-1}} F_{2,n}^{ij}$. By the inclusion-exclusion principle and ignoring lower-dimensional intersections, we have
     \begin{align*}
       \check{X}_{n}=V_{d-1}(F_{2,n}^{i,R})+ \sum_{j=1}^{M^{d-1}} \bigg[ &V_{d-1}(F_{2,n}^{ij})- V_{d-1}(F_{2,n}^{ij}\cap F_{2,n}^{i,R})\bigg]\\ &-\sum_{\{j,m\}:Q_j\sim Q_m} V_{d-1}(F_{2,n}^{ij}\cap F_{2,n}^{im}),
     \end{align*}
     where $Q_j\sim Q_m$ in the last sum means that $Q_j$ and $Q_m$ are direct neighbors. Now we insert this representation into the $k$-th covariance term in \eqref{eq:sur-proof-4th-term} and sort out, which of the terms still have some of the basic random variables in common with $\widehat{Q}_k$ and which are independent. First observe that, for any $k$, $\Cov(V_{d-1}(F_{2,n}^{i,R}),\widehat{Q}_k)=0$, since the variables are determined by disjoint families of the basic random variables of the process. For a similar reason, the covariances $\Cov(V_{d-1}(F_{2,n}^{ij}),\widehat{Q}_k)$ and $\Cov(V_{d-1}(F_{2,n}^{ij}\cap F_{2,n}^{i,R}),\widehat{Q}_k)$ vanish whenever $j\neq k$. Finally, $\Cov(V_{d-1}(F_{2,n}^{ij}\cap F_{2,n}^{im}),\widehat{Q}_k)=0$, whenever $k\notin\{j,m\}$. Hence, we get
     \begin{align} \label{eq:4th-term-eq2}
        \Cov\left(\check{X}_{n},\widehat{Q}_k\right)
        &=\Cov(V_{d-1}(F_{2,n}^{ik}),\widehat{Q}_k)-\Cov(V_{d-1}(F_{2,n}^{ik}\cap F_{2,n}^{i,R}),\widehat{Q}_k)\notag\\&\qquad - \sum_{m:Q_m\sim Q_k} \Cov(V_{d-1}(F_{2,n}^{ik}\cap F_{2,n}^{im}),\widehat{Q}_k).
     \end{align}
In order to derive a recursion, note that analogously to Lemma~\ref{lem:self-sim-j}, we have
\begin{align*}
   &\left(V_{d-1}(F_{3,n}^{ik}),V_{d-1}(F_{3,n}^i\cap Q_k\cap F_{2,n}^\ell)\right)\\
   &\qquad\deq M^{1-d} \left(V_{d-1}(F_{2,n-1}^i), V_{d-1}(F_{2,n-1}^i\cap F_{2,n-1}^\ell)\right)=M^{1-d} \left(\check{X}_{n-1},\widehat{X}_{n-1}\right),
\end{align*}
which implies for the first term in the right hand side of \eqref{eq:4th-term-eq2}
\begin{align*}
  \Cov&(V_{d-1}(F_{2,n}^{ik}),\widehat{Q}_k) = \Cov(V_{d-1}(F_{3,n}^{ik})\cdot Y_{ik},V_{d-1}(F_{3,n}^{i}\cap Q_k\cap F_{2,n}^\ell)\cdot Y_{ik})\\
  &=\Cov(V_{d-1}(F_{3,n}^{ik}),V_{d-1}(F_{3,n}^{i}\cap Q_k\cap F_{2,n}^\ell))\cdot \E(Y_{ik}^2)\\
  &\qquad +\E V_{d-1}(F_{3,n}^{ik}) \E V_{d-1}(F_{3,n}^{i}\cap Q_k\cap F_{2,n}^\ell)\Var(Y_{ik})\\
  &= M^{2(1-d)} \Cov\left(\check{X}_{n-1},\widehat{X}_{n-1}\right)\cdot p
  +M^{2(1-d)} \E\check{X}_{n-1}\cdot  \E \widehat{X}_{n-1}\cdot p(1-p)\\
  &=\frac p{M^{2(d-1)}} \cdot \gamma_{n-1} +\frac{1-p}{M^{3(d-1)}} p^{2n-3} \cdot \E\check{X}_{n-1}.
\end{align*}
Here we have used \eqref{eq:exp-Xn-Xnil2} and \eqref{eq:exp-Xnil} to replace $\E \widehat{X}_{n-1}$ in the last equality. Similarly, employing \eqref{eq:self-sim-j-Exp} and \eqref{eq:exp-surf-Exp} of Theorem~\ref{thm:var-surf}, we can replace $\E\check{X}_{n-1}$ with some explicit expression.
We conclude, that
\begin{align}
  \Cov(V_{d-1}&(F_{2,n}^{ik}),\widehat{Q}_k)
  =\frac p{M^{2(d-1)}} \cdot \gamma_{n-1} +\frac{1-p}{M^{3(d-1)}} p^{2n-2} \cdot \E V_{d-1}(F_{n-2}) \notag \\
  &=\frac p{M^{2(d-1)}}  \gamma_{n-1} +
  (Mp^3)^{n} \frac {d(1-p)^2}{M^{4d-3}p^4(M-p)} \left(1+\frac{M-1}{1-p}\left(\frac pM\right)^{n-1}\right). \label{eq:4rd-term-recursion-part}
\end{align}
Inserting this into \eqref{eq:4th-term-eq2} and combining it with \eqref{eq:sur-proof-4th-term} yields the desired recursion relation for $\gamma_n$, provided we can compute the two missing terms in \eqref{eq:4th-term-eq2}. For the second term in \eqref{eq:4th-term-eq2}, $\Cov(V_{d-1}(F_{2,n}^{ik}\cap F_{2,n}^{i,R}),\widehat{Q}_k)$, 
observe that we can replace $F_{2,n}^{i,R}$ by $F_{2,n}^{ik'}$ without changing the random variable $V_{d-1}(F_{2,n}^{ik}\cap F_{2,n}^{i,R})$, where $k'$ is the unique index such that $J_{ik'}$ is the unique direct neighbor of $J_{ik}$ contained in $R$. Indeed, all the other cubes in $R$ have lower dimensional intersections with $F_{2,n}^{ik}$. Similarly, we can replace in $\widehat{Q}_k=V_{d-1}(F_{2,n}^i\cap Q_k\cap F_{2,n}^\ell)$, the set $F_{2,n}^i\cap Q_k$ by $F_{2,n}^{ik}$ and the set $F_{2,n}^\ell$ by $F_{2,n}^{\ell \ell'}$, where $\ell'$ is the unique index such that the cubes $J_{ik}$ and $J_{\ell \ell'}$ are direct neighbors. Hence,
\begin{align*}
   \Cov(V_{d-1}(F_{2,n}^{ik}\cap F_{2,n}^{i,R}),\widehat{Q}_k)&=\Cov(V_{d-1}(F_{2,n}^{ik}\cap F_{2,n}^{ik'}),V_{d-1}(F_{2,n}^{ik}\cap F_{2,n}^{\ell \ell'})).
\end{align*}
Note that the cubes $J_{ik'}$, $J_{ik}$ and $J_{\ell \ell'}$ lie in a row, and therefore we are in the situation of the first type of covariance studied for the third term (cf.\ \eqref{eq:3rd-term-in-a-row} and the paragraph before this equation), but one level further down. More precisely, we have
\begin{align*}
  \left(V_{d-1}(F_{2,n}^{ik}\cap F_{2,n}^{ik'}),\right.&\left.V_{d-1}(F_{2,n}^{ik}\cap F_{2,n}^{\ell \ell'})\right)\\&\deq M^{1-d} \left(V_{d-1}(F_{n-1}^{i}\cap F_{n-1}^{i'}),V_{d-1}(F_{n-1}^{i}\cap F_{n-1}^{\ell})\right),
\end{align*}
where $i'$ is the (unique) index such that $J_{i'}$, $J_i$ and $J_\ell$ lie in a row. (Note that $i'$ might not exist if $J_i$ touches the boundary of $J$, but then one can shift the situation or think of $F_{n-1}^{i'}$ as being part of a second fractal percolation in the corresponding unit size square neighboring $J$. This will not change the stated distributional relation.) Employing \eqref{eq:3rd-term-in-a-row}, we conclude that for any $n\in\N$, $n\geq 2$,
\begin{align} \label{eq:4rd-term-in-a-row}
     \Cov(V_{d-1}(F_{2,n}^{ik}\cap F_{2,n}^{i,R}),\widehat{Q}_k)
     &=M^{2(1-d)}\Cov(X_{n-1}^{i,i'},X_{n-1}^{i,\ell})=\frac {1-p}{M^{4(d-1)}}p^{4n-5},
     \end{align}
     while this covariance vanishes for $n\leq 1$.

The last sum in \eqref{eq:4th-term-eq2} consists of $2(d-1)$ terms of the form $\Cov(V_{d-1}(F_{2,n}^{ik}\cap F_{2,n}^{im}),\widehat{Q}_k)$, for which the corresponding cubes $J_{ik}$ and $J_{im}$ are direct neighbors. As for the previous term we can use the relation $\widehat{Q}_k=V_{d-1}(F_{2,n}^{ik}\cap F_{2,n}^{\ell\ell'})$, where $\ell'$ was the unique index such that the cubes $J_{ik}$ and $J_{\ell \ell'}$ are direct neighbors. Hence,
\begin{align*}
   \Cov(V_{d-1}(F_{2,n}^{ik}\cap F_{2,n}^{im}),\widehat{Q}_k)&=\Cov(V_{d-1}(F_{2,n}^{ik}\cap F_{2,n}^{im}),V_{d-1}(F_{2,n}^{ik}\cap F_{2,n}^{\ell \ell'})),
\end{align*}
where now the cubes $J_{im}$ and $J_{\ell\ell'}$ have a nonempty intersection. Thus we are in the situation of the second type of covariance studied for the second term (cf.\ \eqref{eq:3rd-term-type2} and the paragraph before this equation), just one level further down. More precisely, we have
\begin{align*}
  \left(V_{d-1}(F_{2,n}^{ik}\cap F_{2,n}^{im}),\right.&\left.V_{d-1}(F_{2,n}^{ik}\cap F_{2,n}^{\ell \ell'})\right)\\
  &\deq M^{1-d} \left(V_{d-1}(F_{n-1}^{i}\cap F_{n-1}^{i'}),V_{d-1}(F_{n-1}^{i}\cap F_{n-1}^{\ell})\right),
\end{align*}
where now  $i'$ is one of the indices such that $J_{i'}$ and $J_i$ are direct neighbors and $J_{i'}$ and $J_\ell$ are distinct and have a nonempty intersection. Employing \eqref{eq:3rd-term-type2}, we conclude that for $p>M^{-d}$ and any $n\in\N$, $n\geq 2$,
\begin{align}
     \sum_{m: Q_k\sim Q_m}\Cov(&V_{d-1}(F_{2,n}^{ik}\cap F_{2,n}^{im}),\widehat{Q}_k)
     =2(d-1)M^{2(1-d)}\Cov(X_{n-1}^{i,i'},X_{n-1}^{i,\ell}) \notag\\
     &=
         \frac{2(d-1)(1-p)}{M^{3d-4}(M^d p-1)}\cdot p^{4(n-1)}\left(1-(M^{d}p)^{-n+1}\right),
     \label{eq:4rd-term-type2} 
     \end{align}
     while this covariance vanishes for $n\leq 1$.

    Plugging now the expressions derived in \eqref{eq:4rd-term-recursion-part}, \eqref{eq:4rd-term-in-a-row} and \eqref{eq:4rd-term-type2} into \eqref{eq:4th-term-eq2} and summing over all $k=1,...,M^{d-1}$, we obtain the desired recursion for the covariance $\gamma_n$ in \eqref{eq:sur-proof-4th-term}. For $n\geq 2$ and $M^dp\neq 1$, we have
    \begin{align*}
       \gamma_n 
       &=\frac p{M^{d-1}} \cdot \gamma_{n-1} +
  (Mp^3)^{n} \frac {d(1-p)^2}{M^{3d-2}(M-p)p^4} \left(1+\frac{M-1}{1-p}\left(\frac pM\right)^{n-1}\right)\\
  &\qquad -\frac {1-p}{M^{3(d-1)}}p^{4n-5}-\frac{2(d-1)(1-p)}{M^{2d-3}(M^d p-1)}\cdot p^{4(n-1)}\left(1-(M^{d}p)^{-n+1}\right)\\
  &=\frac p{M^{d-1}} \cdot \gamma_{n-1} +
  \tilde c_1\cdot (Mp^3)^{n} +\tilde c_2\cdot p^{4n} + \tilde c_3 \cdot \left(M^{-d}p^3\right)^n,
  \end{align*}
    where $\tilde c_1:=\frac {d(1-p)^2}{M^{3d-2}p^4(M-p)} $, $\tilde c_3:=\frac{2(d-1)(1-p)}{M^{d-3}p^3(M^d p-1)}$ and
    \begin{align*}
       \tilde c_2&:=\frac{d(M-1)(1-p)}{M^{3(d-1)}p^5(M-p)}-\frac {1-p}{M^{3(d-1)}p^5}-\frac{2(d-1)(1-p)}{M^{2d-3}(M^d p-1)p^4}\\
       &=\frac{1-p}{M^{3(d-1)}p^5}\left(d\frac{M-1}{M-p}-1-2(d-1)\frac{M^dp}{M^dp-1}\right).
    \end{align*}
    By induction, we get from the above recursion that, for $n\geq 2$,
    \begin{align*}
      \gamma_n 
      =& (Mp^3)^n \tilde c_1\frac{M^dp^2}{M^dp^2-1} +p^{4n} \tilde c_2 \frac{M^{d-1}p^3}{M^{d-1}p^3-1} +\left(\frac{p^{3}}{M^d}\right)^{n} \tilde c_3 \frac{p^2}{p^2-M}\\
      &-\left(\frac{p}{M^{d-1}}\right)^{n}\underbrace{\left(\tilde c_1 \frac{(M^{d}p^2)^2}{M^dp^2-1}+ \tilde c_2 \frac{(M^{d-1}p^3)^2}{M^{d-1}p^3-1}+ \tilde c_3\frac{(M^{-1}p^2)^2}{M^{-1}p^2-1}\right)}_{=:\tilde c_4},
    \end{align*}
     where we have used that  $\gamma_1=0$ and where we have excluded the case $p^2=M^{-d}$, in which the constant in the first term has to be replaced by $\tilde c_1(n-1)$ and the first summand in $\tilde c_4$ by $0$.
     In the sequel, we will only need an asymptotic expression for $\gamma_n$ as $n\to\infty$. It turns out that it depends on the parameter combination, which term is the leading one.
    For $p>M^{-d}$, we obtain, as $n\to\infty$,
     \begin{align*}
       \gamma_n
  &=\begin{cases}
     \tilde c_1\cdot \frac{M^{d}p^2}{M^{d}p^2-1} (Mp^3)^{n}+ O(\max\{p^{4}, p/M^{d-1}\}^n), & \text{ if } p>M^{-d/2},\\
     \tilde c_1 \cdot (Mp^3)^{n} \cdot (n-1)+ O((Mp^3)^{n}),& \text{ if } p^2= M^{-d},\\
     \tilde c_4 \cdot (p/M^{d-1})^n + O((Mp^3)^n),& \text{ if } p< M^{-d/2}.
  \end{cases}
  \end{align*}
  Now we can complete the analysis of the 4-th term. Inserting the last expression into equation \eqref{eq:4th-term-expr1} and noting that the second term in \eqref{eq:4th-term-expr1} is always of order $O((Mp^{3})^n)$, we conclude that for any $p>M^{-d}$, as $n\to\infty$,
  \begin{align} \label{eq:cov-term4-final} 
     \Cov(X_n^{i},X_n^{i,\ell})&=  \begin{cases}
     \hat c_1\cdot(Mp^3)^{n}+ O(\max\{p^{4}, p/M^{d-1}\}^n),
     & \text{if } p>M^{-d/2},\\
     \tilde c_1 \cdot (Mp^3)^{n} \cdot (n-1)+ O((Mp^3)^{n}),& \text{if } p^2= M^{-d},\\
     \tilde c_4 \cdot (p/M^{d-1})^n + O((Mp^3)^n),& \text{if } p< M^{-d/2},
  \end{cases}
    %
        \end{align}
        where $ \hat c_1:=\frac{d(1-p)}{M^{2(d-1)}p(M-p)} (1+\frac{1-p}p\frac{1}{M^dp^2-1})$.
  \paragraph{\bf Computation of the variance of $X_n=V_{d-1}(F_n)$.}
  Now that we have determined all the terms on the right hand side of \eqref{eq:sur-var-proof1}, we can go back to this equation and put all the pieces together. Setting $\beta_n:=\Var(X_n)$ and combining \eqref{eq:sur-var-recursion1} with  \eqref{eq:sur-var-proof1}, we get
   \begin{align} \label{eq:sur-var-proof2}
    \notag \beta_n    &= \frac{p}{M^{d-2}}\beta_{n-1}
    + \frac {d^2(1-p)^3}{M^{d-2}p(M-p)^2} (Mp)^{2n}
    \left(1+ \frac{M-1}{1-p}\left(\frac pM\right)^n\right)^2\\
    & \quad + \sum_{\{i,\ell\}\subset\Sigma}\hspace{-2mm} \Var(X_n^{i,\ell})+ \sum_{
    \{i,\ell\}\neq\{i',\ell'\}}\hspace{-4mm} \Cov(X_n^{i,\ell},X_n^{i',\ell'}) -2\sum_{j\in\Sigma}\sum_{\{i,\ell\}\subset\Sigma}\Cov\left(X_n^j,X_n^{i,\ell}\right).
  \end{align}
 Observe from \eqref{eq:var-Xnjl-3}, that the first sum in the second line above satisfies
  \begin{align*}
     \sum_{\{i,\ell\}\subset\Sigma} \Var(X_n^{i,\ell})&=  \begin{cases}
 O(p^{4n}), &M^{d-1}p^2>1,\\
  O(p^{4n}\cdot n), &M^{d-1}p^2= 1,\\
  O((p^2/M^{d-1})^n), &M^{d-1}p^2< 1,
     \end{cases}
  \end{align*}
  as $n\to\infty$. So the order of the leading term depends on whether $p$ is below,  at or above the value $M^{-\frac{d-1}2}$. In any case this term is certainly dominated by $p^{2n}$, as $n\to\infty$, (i.e., $\sum_{\{i,\ell\}\subset\Sigma} \Var(X_n^{i,\ell})=o(p^{2n})$) and vanishes thus rapidly for $n\to\infty$.
  Equations \eqref{eq:3rd-term-in-a-row} and \eqref{eq:3rd-term-type2-3} show that  the situation for the second sum in \eqref{eq:sur-var-proof2} is easier. For  $p>M^{-d}$ and any $n\in\N$, we have
 \begin{align*}
    \sum_{
    \{i,\ell\}\neq\{i',\ell'\}} \Cov(X_n^{i,\ell},X_n^{i',\ell'})
    &=\frac {2d(M-2)(1-p)}{M^{d-1}p}p^{4n}
    + 8\binom{d}{2}(M-1)^2\times \\ &\quad 
         \frac{1-p}{M^d p-1}\cdot p^{4n}\left(1-(M^{d}p)^{-n}\right),
 \end{align*}
 and thus, as $n\to\infty$,
 \begin{align*}
    \sum_{
    \{i,\ell\}\neq\{i',\ell'\}} \Cov(X_n^{i,\ell},X_n^{i',\ell'})
    &=
          O(p^{4n}). 
 \end{align*}
 For the last sum in \eqref{eq:sur-var-proof2} we infer from \eqref{eq:cov-term4-final}, that in all three cases this term is certainly dominated by $(Mp)^{2n}$, as $n\to\infty$. Thus, the second term in \eqref{eq:sur-var-proof2} (of order $(Mp)^{2n}$) is always the leading one.  But we also need the correct second order term in order to say something about the speed of convergence of $\Var(Z_n^{d-1})$. The recursion for $\beta_n$ will produce a second term of order  $(p/M^{d-2})^n$, but it turns out that for some parameters there are others of higher order which we will therefore have to take into account in the recursion. Note that for $p^2\geq1/M^{d-1}$, we have $Mp^3\geq p/M^{d-2}$ and thus we include the corresponding term of order $(Mp^3)^n$ into the recursion, which is the one of highest order among the remaining ones for these parameters. This is not necessary for $p^2<1/M^{d-1}$. 
 More precisely, we have for any $p>M^{-d}$, as $n\to\infty$,
\begin{align*}
  \beta_n=\frac{p}{M^{d-2}}\cdot \beta_{n-1} + \tilde{c}\cdot(Mp)^{2n}+\begin{cases} \hat c_1\cdot(Mp^3)^{n}+O(q^n), &\text{if } p^2\geq M^{-(d-1)},\\
  O((Mp^3)^n),& \text{if } p^2< M^{-(d-1)},
  \end{cases}
\end{align*}
where $\tilde{c}:=\frac {d^2(1-p)^3}{M^{d-2}p(M-p)^2}$, $\hat c_1$ is the same as in \eqref{eq:cov-term4-final} and $q$ is some number strictly less than $Mp^3$ (depending on the parameter combination).
By induction, this yields
\begin{align*}
  \beta_n=\tilde{c} \frac{M^{d}p}{M^{d}p-1}(Mp)^{2n} +
  \begin{cases}
    \hat c_1 \frac{M^{d-1}p^2}{M^{d-1}p^2-1}(Mp^3)^n+ O(s^n),& \text{if } p^2>1/M^{d-1},\\
    \hat c_1 (Mp^3)^n \cdot n+O((Mp^3)^n), & \text{if } p^2=1/M^{d-1},\\
    \bar c \left(\frac{p}{M^{d-2}}\right)^n +O((Mp^3)^n) , & \text{if } p^2<1/M^{d-1},\\
  \end{cases} 
\end{align*}
as $n\to\infty$, where $\tilde c$ and $\hat c_1$ are as above, $s<Mp^3$ is some number and $\bar c\in\R$ is some constant, we do not give explicitly here, as it would require to determine all terms in the above recursion exactly. Here we have used that $\beta_0=\Var(V_{d-1}(F_0))=0$ and that $p>M^{-d}$ is equivalent to $M^2p^2>p/M^{d-2}$. Note that $\tilde{c} \frac{M^{d}p}{M^{d}p-1}=\bar c_2$, i.e.\ the constant in first order term of $\Var(V_{d-1}(F_n))$ is as asserted in \eqref{eq:exp-surf-Var}, and also the second order terms given in \eqref{eq:exp-surf-Var} are transparent from the above equation.
This completes the proof of the formula for the variance in Theorem~\ref{thm:var-surf}.
     \end{proof}

 \fund 
\noindent S.W.~was supported by the Deutsche Forschungsgemeinschaft
(DFG, German Research Foundation) grant WI 3264/5-1.
M.A.K.~was supported by the DFG through the SPP 2265 under grant numbers
LO 418/25-1, WI 5527/1-1, and ME 1361/16-1.
M.A.K.~also acknowledges funding by the Volkswagenstiftung via the
Experiment-Projekt Mecke and by the Helmholtz Association and the DLR via
the Helmholtz Young Investigator Group ``DataMat''.

\competing 
\noindent There were no competing interests to declare which arose during the preparation or publication process of this article.

\bibliographystyle{abbrv}
\footnotesize
\bibliography{mp}

\end{document}